\documentclass[A4,12pt]{article}
\usepackage{cite}
\usepackage{amsmath,amssymb,mathtools}
\usepackage{graphicx,framed,enumerate}
\usepackage{amsthm,thmtools,thm-restate}
\usepackage[margin=1in]{geometry}
\usepackage{fancyhdr}
\usepackage{tikz}

\frenchspacing

\usetikzlibrary{calc}

\usepackage{expl3}
\usepackage{xparse,adjustbox,ifthen}
\usepackage{calc,intcalc}
\usepackage{etoolbox}

\pagestyle{myheadings}



\newcommand{\z}{\mathbb{Z}}

\renewcommand{\c}{\mathbb{C}}
\renewcommand{\k}{\mathbb{K}}

\renewcommand{\a}{\alpha}
\renewcommand{\b}{\beta}
\newcommand{\g}{\gamma}

\renewcommand{\l}{\lambda}


\newcommand{\<}{\left\langle}
\renewcommand{\>}{\right\rangle}

\newcommand{\is}{\cong}

\DeclareMathOperator{\cyc}{cyc}
\DeclareMathOperator{\Conj}{Conj}

\DeclareMathOperator{\Hom}{Hom}


\DeclareMathOperator{\Fix}{Fix}

\newcommand{\ds}{\displaystyle}
\newcommand{\ts}{\textstyle}

\newcommand{\pvec}[2]{ \ds\begin{bmatrix} {#1}\\{#2} \end{bmatrix} }


\newcommand{\minus}{\smallsetminus}

\renewcommand{\ss}{\subseteq}

\newtheorem{thm}{Theorem}[section]
\newtheorem{lem}[thm]{Lemma}
\newtheorem{prop}[thm]{Proposition}
\newtheorem{cor}[thm]{Corollary}

\newtheorem{question}[thm]{Question}
\theoremstyle{definition}
\newtheorem*{defn}{Definition}
\newtheorem*{exA}{Example A}
\theoremstyle{remark}
\newtheorem*{remark}{Remark}

\theoremstyle{empty}

\newcommand{\goodpair}{$C$-admissible}
\newcommand{\CRT}{the Chinese Remainder Theorem}


\begin{document}

\title{Parity-Unimodality and a Cyclic Sieving Phenomenon for Necklaces}
\author{Eric Stucky}
\date{22 September 2020}

\maketitle

\begin{abstract}
We discuss two surprising properties of a family of polynomials that generalize the Mahonian $q$-Catalan polynomials, and more generally the $q$-Schr\"oder polynomials. By interpreting them as $\mathfrak{sl}_2$-characters, we show that the rational $q$-Schr\"oder polynomials are parity-unimodal, which means that the even- and odd-degree coefficients are separately unimodal. Second, we show that they exhibit a $q=-1$ phenomenon. This is a special case of a more general cyclic sieving phenomenon for certain transitive $S_n$-actions, deduced from Molien's formula.
\end{abstract}

\section{Introduction}

Given a sequence $\alpha=(\alpha_1,\ldots,\alpha_r)$ of nonnegative integers that sums to $n$, the \textbf{multinomial coefficient}
$$ \binom{n}{\alpha}  =  \binom{n}{\alpha_1,\ldots,\alpha_r}  =  \frac{ n! }{ \alpha_1!  \cdots \alpha_r! } $$
is a positive integer, counting the number of words having exactly $\alpha_i$ occurrences of the letter $i$ for each $i=1,2,\ldots,r$.   The symmetric group $S_n$ acts on the set of such words by permuting positions, and when one restricts this action to the cyclic subgroup $C=\langle c \rangle$ generated by the $n$-cycle $c=(1,2,\ldots,n)$, the orbits are called \textbf{necklaces} with $\alpha_i$ \textbf{beads} of color $i$; we refer to these as $\alpha$-necklaces. It is easily seen that the $C$-action on $\alpha$-necklaces will be free if and only if $\gcd(\alpha)=\gcd(\alpha_1,\ldots,\alpha_r)=1$,  and thus the number of $\alpha$-necklaces in this case is given by  
$C(\alpha)=\frac{1}{n} \binom{n}{\alpha}$.

When $\alpha=(a,a+1)$, this is the well-known \textbf{Catalan number}:
$$ C(a,a+1)=\frac{1}{2a+1}\binom{2a+1}{a}=\frac{1}{a+1}\binom{2a}{a}. $$
For example, when $\alpha=(3,4)$, there are $C(3,4)=\frac{1}{7}\binom{7}{3}=\frac{1}{4}\binom{6}{3}=5$ such necklaces with $3$ black beads and 4 white beads, shown here:

\vspace{0.2in}
\begin{center}
\begin{tikzpicture}[scale=1]
\pgfmathsetlengthmacro{\dot}{5.5pt};
\pgfmathsetlengthmacro{\layerheight}{48pt};
\pgfmathsetlengthmacro{\heightoffset}{\layerheight / 2};

\pgfmathsetmacro{\b}{3};
\pgfmathsetmacro{\a}{4};
\pgfmathsetlengthmacro{\sqsize}{8pt};

\pgfmathsetlengthmacro{\neckR}{\dot*4.5};
\pgfmathsetmacro{\k}{0};
\pgfmathsetmacro{\beads}{\a+\b-\k};
\pgfmathsetmacro{\beadSep}{360/\beads};
\pgfmathsetmacro{\beadOff}{90};
\pgfmathsetlengthmacro{\beadR}{\dot};

\coordinate (top1) at (0,\heightoffset+3*\layerheight);
\coordinate (top2) at (0,\heightoffset+2*\layerheight);
\coordinate (top3) at (0,\heightoffset+1*\layerheight);
\coordinate (TL) at (0pt,\heightoffset);
\coordinate (TR) at (80pt,\heightoffset);
\coordinate (BL) at (0pt,-\heightoffset);
\coordinate (BR) at (80pt,-\heightoffset);
\coordinate (bot3) at (0,-\heightoffset-1*\layerheight);
\coordinate (bot2) at (0,-\heightoffset-2*\layerheight);
\coordinate (bot1) at (0,-\heightoffset-3*\layerheight);

\begin{scope}[shift={(0pt,\heightoffset)}]
	\coordinate (center) at (0, 0);
	\draw [black,] (center) circle [radius=\neckR];
	\draw [black, fill=black] (center)+(\beadOff-0*\beadSep:\neckR) circle [radius=\beadR*0.8];
	\draw [black, fill=black] (center)+(\beadOff-1*\beadSep:\neckR) circle [radius=\beadR*0.8];
	\draw [black, fill=black] (center)+(\beadOff-2*\beadSep:\neckR) circle [radius=\beadR*0.8];
	\draw [black, fill=white, thin] (center)+(\beadOff-3*\beadSep:\neckR) circle [radius=\beadR];
	\draw [black, fill=white, thin] (center)+(\beadOff-4*\beadSep:\neckR) circle [radius=\beadR];
	\draw [black, fill=white, thin] (center)+(\beadOff-5*\beadSep:\neckR) circle [radius=\beadR];
	\draw [black, fill=white, thin] (center)+(\beadOff-6*\beadSep:\neckR) circle [radius=\beadR];
\end{scope}

\begin{scope}[shift={(160pt,\heightoffset)}]
	\coordinate (center) at (0, 0);
	\draw [black,] (center) circle [radius=\neckR];
	\draw [black, fill=black] (center)+(\beadOff-0*\beadSep:\neckR) circle [radius=\beadR*0.8];
	\draw [black, fill=black] (center)+(\beadOff-1*\beadSep:\neckR) circle [radius=\beadR*0.8];
	\draw [black, fill=black] (center)+(\beadOff-4*\beadSep:\neckR) circle [radius=\beadR*0.8];
	\draw [black, fill=white, thin] (center)+(\beadOff-2*\beadSep:\neckR) circle [radius=\beadR];
	\draw [black, fill=white, thin] (center)+(\beadOff-3*\beadSep:\neckR) circle [radius=\beadR];
	\draw [black, fill=white, thin] (center)+(\beadOff-5*\beadSep:\neckR) circle [radius=\beadR];
	\draw [black, fill=white, thin] (center)+(\beadOff-6*\beadSep:\neckR) circle [radius=\beadR];
\end{scope}

\begin{scope}[shift={(240pt,-\heightoffset)}]
	\coordinate (center) at (0, 0);
	\draw [black,] (center) circle [radius=\neckR];
	\draw [black, fill=black] (center)+(\beadOff-0*\beadSep:\neckR) circle [radius=\beadR*0.8];
	\draw [black, fill=black] (center)+(\beadOff-1*\beadSep:\neckR) circle [radius=\beadR*0.8];
	\draw [black, fill=black] (center)+(\beadOff-5*\beadSep:\neckR) circle [radius=\beadR*0.8];
	\draw [black, fill=white, thin] (center)+(\beadOff-2*\beadSep:\neckR) circle [radius=\beadR];
	\draw [black, fill=white, thin] (center)+(\beadOff-3*\beadSep:\neckR) circle [radius=\beadR];
	\draw [black, fill=white, thin] (center)+(\beadOff-4*\beadSep:\neckR) circle [radius=\beadR];
	\draw [black, fill=white, thin] (center)+(\beadOff-6*\beadSep:\neckR) circle [radius=\beadR];
\end{scope}

\begin{scope}[shift={(80pt,-\heightoffset)}]
	\coordinate (center) at (0, 0);
	\draw [black,] (center) circle [radius=\neckR];
	\draw [black, fill=black] (center)+(\beadOff-0*\beadSep:\neckR) circle [radius=\beadR*0.8];
	\draw [black, fill=black] (center)+(\beadOff-1*\beadSep:\neckR) circle [radius=\beadR*0.8];
	\draw [black, fill=black] (center)+(\beadOff-3*\beadSep:\neckR) circle [radius=\beadR*0.8];
	\draw [black, fill=white, thin] (center)+(\beadOff-2*\beadSep:\neckR) circle [radius=\beadR];
	\draw [black, fill=white, thin] (center)+(\beadOff-4*\beadSep:\neckR) circle [radius=\beadR];
	\draw [black, fill=white, thin] (center)+(\beadOff-5*\beadSep:\neckR) circle [radius=\beadR];
	\draw [black, fill=white, thin] (center)+(\beadOff-6*\beadSep:\neckR) circle [radius=\beadR];
\end{scope}

\begin{scope}[shift={(320pt,\heightoffset)}]
	\coordinate (center) at (0, 0);
	\draw [black,] (center) circle [radius=\neckR];
	\draw [black, fill=black] (center)+(\beadOff-0*\beadSep:\neckR) circle [radius=\beadR*0.8];
	\draw [black, fill=black] (center)+(\beadOff-2*\beadSep:\neckR) circle [radius=\beadR*0.8];
	\draw [black, fill=black] (center)+(\beadOff-4*\beadSep:\neckR) circle [radius=\beadR*0.8];
	\draw [black, fill=white, thin] (center)+(\beadOff-1*\beadSep:\neckR) circle [radius=\beadR];
	\draw [black, fill=white, thin] (center)+(\beadOff-3*\beadSep:\neckR) circle [radius=\beadR];
	\draw [black, fill=white, thin] (center)+(\beadOff-5*\beadSep:\neckR) circle [radius=\beadR];
	\draw [black, fill=white, thin] (center)+(\beadOff-6*\beadSep:\neckR) circle [radius=\beadR];
\end{scope}
\end{tikzpicture}
\end{center}

This paper concerns two surprising properties of a $q$-analogue of $C(\alpha)$:
\begin{equation}
\label{main-character}
C(\alpha;q) =\frac{1}{[n]_q} \pvec{n}{\alpha}_{q},
\end{equation}
defined in terms of these standard $q$-analogues:
\begin{align*}
\pvec{n}{\alpha}_{q} &=\frac{ [n]!_q }{ [\alpha_1]!_q  \cdots [\alpha_r]!_q }, \\
[n]!_q &=[n]_q [n-1]_q \cdots [2]_q [1]_q, \\
[n]_q &=1+q+q^2+\cdots+q^{n-1}.
\end{align*}

Notice that $C(k,a-k,b-k;q)$ agrees with the usual definition of the \textbf{rational $q$-Schr\"oder polynomial}; this specializes to a \textbf{rational $q$-Catalan polynomial} when $k=0$, and further specializes to \textbf{MacMahon's $q$-Catalan polynomial} when also $b=a+1$.

\subsection{Parity Unimodality}

Let us say that a polynomial $X(q)=\sum_i a_i q^i$ in $q$ with nonnegative coefficients $a_i$ is \textbf{parity-unimodal} if both subsequences $(a_0,a_2,a_4,\ldots)$ and $(a_1,a_3,a_5,\ldots)$ are unimodal. We have tested the following conjecture numerically up to $n\leq 30$:

\begin{restatable}{conj}{parityuniconj}
\label{parityuniconj}
When $\gcd(\alpha)=1$, the polynomial $C(\alpha;q)$ is parity-unimodal.
\end{restatable}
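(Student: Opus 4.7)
The plan is to realize $C(\alpha;q)$ as the Hilbert series of a graded vector space $V_\alpha$ carrying an $\mathfrak{sl}_2$-action whose raising and lowering operators shift the $q$-grading by $+2$ and $-2$, respectively. Given such a $V_\alpha$, parity-unimodality follows at once: the decomposition $V_\alpha = V_{\mathrm{even}} \oplus V_{\mathrm{odd}}$ by parity of $q$-degree is $\mathfrak{sl}_2$-stable, and on each summand, after the rescaling $u = q^2$, one recovers a standard $\mathfrak{sl}_2$-action with degree-$\pm 1$ shifts. The Hilbert series of each summand, viewed as a polynomial in $u$, is then symmetric and unimodal by Sylvester's theorem (equivalently, by the hard Lefschetz principle). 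Note that a degree-$\pm 1$ shifting action on $V_\alpha$ itself is ruled out, since that would force full unimodality of $C(\alpha;q)$, already contradicted by $C(2,3;q) = 1 + q^2$.

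A natural first candidate for $V_\alpha$ is $(R_n^{S_\alpha})^C$, the space of $C$-invariants of the $S_\alpha$-invariants in the coinvariant algebra $R_n = \mathbb{Q}[x_1,\ldots,x_n]/(e_1,\ldots,e_n)$. A Molien computation --- closely related to the cyclic sieving argument developed later in this paper --- is expected to identify its Hilbert series as $\frac{1}{[n]_q}\pvec{n}{\alpha}_{q} = C(\alpha;q)$ when $\gcd(\alpha) = 1$. The classical Lefschetz $\mathfrak{sl}_2$-triple on $R_n$ (from hard Lefschetz on the full flag variety) is $S_n$-invariant, so it restricts to $V_\alpha$, but it shifts the $q$-grading only by $\pm 1$ and so cannot be the action we require.

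The principal obstacle is the construction, for arbitrary $r$, of a second $\mathfrak{sl}_2$-action on $V_\alpha$ (or on a different model of $C(\alpha;q)$) with degree-$\pm 2$ shifts. For $r \leq 3$, the rational-$q$-Schr\"oder character argument of this paper supplies such an action by an explicit small construction, but no immediate extension works for $r \geq 4$. One promising source is the cohomology of an affine Springer fiber $\mathcal{F}_\alpha$ attached to a regular semisimple element of appropriate slope: in the two-part case, $H^\ast(\mathcal{F}_{k,n-k})$ has Poincar\'e polynomial $C(k,n-k;q)$, and an analogous multinomial variety is expected to carry cohomology in both even and odd degrees together with a Lefschetz $\mathfrak{sl}_2$-triple shifting cohomological degree --- which in this normalization coincides with the $q$-grading --- by $\pm 2$. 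Identifying the correct $\mathcal{F}_\alpha$ and proving $\sum_i q^i \dim H^i(\mathcal{F}_\alpha) = C(\alpha;q)$ for all multinomial $\alpha$ is the genuinely new content required beyond $r=3$. Once such a model is in hand, the $\mathfrak{sl}_2$-decomposition writes $C(\alpha;q) = \sum_k c_k\, q^{a_k}(1 + q^2 + \cdots + q^{2k})$ with each summand supported in a single parity class; grouping by parity and invoking that a sum of centered symmetric unimodal polynomials with a common center of symmetry is itself symmetric unimodal, both the even- and odd-indexed subsequences of $C(\alpha;q)$ are unimodal --- which is precisely parity-unimodality.
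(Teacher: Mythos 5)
You have not proved the statement, and neither does the paper: this is Conjecture \ref{parityuniconj}, which the paper leaves open in general and only establishes for $\alpha=(k,a-k,b-k)$ with $\gcd(a,b)=1$ (Theorem \ref{parityunimodal}). Your reduction step is sound and is exactly the mechanism the paper uses in that special case: if $C(\alpha;q)$ is the Hilbert series of a graded vector space carrying an $\mathfrak{sl}_2$-action whose raising and lowering operators shift degree by $\pm 2$ and whose weight grading agrees with the $q$-grading up to a constant shift, then the character decomposes into irreducible strings $q^{a_k}(1+q^2+\cdots+q^{2k})$ with a common center of symmetry, each supported in a single parity class, and parity-unimodality follows. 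The paper realizes this for three-part $\alpha$ via the Cherednik module $L_{b/a}(1)=SV^*/\langle U\rangle$, on which $e=-\tfrac12\sum x_i^2$ and $f=\tfrac12\sum y_i^2$ act with exactly the degree-$\pm2$ shift you describe, commuting with $S_a$. Your observation that a degree-$\pm1$ action is impossible because $C(2,3;q)=1+q^2$ is not unimodal is correct and is a useful sanity check.

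The genuine gap is the one you name yourself: for $r\geq 4$ you produce no graded $\mathfrak{sl}_2$-module with Hilbert series $C(\alpha;q)$, and without it the argument does not start. The affine Springer fiber suggestion is a plausible research direction, not a step in a proof; nothing in your write-up identifies the variety, establishes the Poincar\'e polynomial, or verifies that hard Lefschetz applies (these fibers are generally singular, so even the existence of a Lefschetz $\mathfrak{sl}_2$ is not automatic). A smaller but real defect is your ``first candidate'' $(R_n^{S_\alpha})^C$: the cyclic group $C$ does not normalize $S_\alpha$, so it does not act on $R_n^{S_\alpha}$ and this space is not well-defined as written; one must instead work with $\Hom_{\c[S_n]}$-spaces or with the $\langle S_\alpha, c\rangle$-structure more carefully, as the paper does in Section \ref{gpthy} for the sieving (not the unimodality) result. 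In short: your proposal correctly reconstructs the strategy behind the paper's partial result, but as a proof of the conjecture it is missing its essential ingredient, and the statement remains open.
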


This conjecture appears to be difficult, but we can prove it for a particularly significant collection of $\alpha$. Namely, we will explain in Section \ref{necklaces} why known results in the theory of rational Cherednik algebras imply Conjecture \ref{parityuniconj} for $\alpha=(k,a-k,b-k)$ when $\gcd(a,b)=1$ and $0 \leq k \leq a$.

\begin{restatable}{thm}{parityunimodal}
\label{parityunimodal}
Let $a,b$, and $k$ be positive integers satisfying $\gcd(a,b)=1$ and $0 \leq k \leq a<b$. Then the rational $q$-Schr\"oder polynomial $C(k,a-k,b-k;q)$ is parity-unimodal.
\end{restatable}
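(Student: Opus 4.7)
The plan is to realize $C(k, a-k, b-k; q)$ as the graded character of a finite-dimensional $\mathfrak{sl}_2$-representation, and to deduce parity-unimodality from the structure of $\mathfrak{sl}_2$-modules.

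First, I would invoke the interpretation of rational $q$-Schr\"oder polynomials coming from rational Cherednik algebras. For coprime $a, b$, the rational Cherednik algebra $H_{a/b}(S_n)$ (for an appropriate choice of $n$) admits a finite-dimensional irreducible module $L_{a/b}$ whose graded character is the rational $q$-Catalan polynomial $C(a,b;q)$; refining by $\wedge^k\mathfrak{h}^*$-isotypic pieces exhibits $C(k, a-k, b-k;q)$ as the Hilbert series of a graded subspace $V_k \subseteq L_{a/b}$. Section~\ref{necklaces} reviews this identification, drawing on results of Berest--Etingof--Ginzburg, Gordon, and others.

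Next, I use the canonical $\mathfrak{sl}_2$-triple sitting inside the Cherednik algebra, spanned by $\mathbf{e}=\tfrac{1}{2}\sum_i x_i^2$, $\mathbf{f}=-\tfrac{1}{2}\sum_i y_i^2$, and $\mathbf{h}=[\mathbf{e},\mathbf{f}]$. These elements lie in the spherical (i.e., $S_n$-invariant) subalgebra, so they preserve $V_k$ and promote it to a graded $\mathfrak{sl}_2$-module in which $\mathbf{e}$ and $\mathbf{f}$ shift the polynomial grading by $+2$ and $-2$ respectively; equivalently, $\mathbf{h}$ acts on the degree-$d$ piece $V_k^{(d)}$ as the scalar $d + C$ for some fixed integer $C$.

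Third, I decompose $V_k$ into $\mathfrak{sl}_2$-irreducibles. Each summand contributes a term $q^{d_0}(1+q^2+q^4+\cdots+q^{2m})$ to $\mathrm{Hilb}(V_k; q)$, entirely supported in a single parity of degree. The requirement that the $\mathbf{h}$-weights of each irreducible be symmetric about zero forces the center of symmetry $d_0+m$ to equal $-C$, uniformly across all summands. Grouping these contributions by the parity of their support, the even- and odd-indexed coefficient sequences of $C(k, a-k, b-k; q)$ are each a sum of gap-$2$ symmetric unimodal polynomials centered at the common point $-C$, and hence are themselves symmetric unimodal. This is exactly the statement of parity-unimodality.

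The main obstacle is Step 1: carefully identifying $V_k$ as the correct $\mathfrak{sl}_2$-stable subspace of $L_{a/b}$ whose Hilbert series matches the $q$-Schr\"oder polynomial, which requires the full power of the Cherednik-theoretic input. Once that identification is established, the remaining steps are formal consequences of the $\mathfrak{sl}_2$-structure; the parity decomposition and unimodality conclusions follow without further input from the Cherednik side.
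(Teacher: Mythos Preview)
Your proposal is correct and follows essentially the same approach as the paper: identify the $q$-Schr\"oder polynomial as the Hilbert series of an isotypic component of the finite-dimensional Cherednik module $L_{b/a}(1)$, then use the $S_a$-invariant $\mathfrak{sl}_2$-triple $(e,f,h)$ inside $H_{b/a}$ to conclude parity-unimodality from standard $\mathfrak{sl}_2$ representation theory. One small correction: the relevant isotypic piece is for $\wedge^k\mathbb{C}^a \cong S^{(a-k,1^k)}\oplus S^{(a-k+1,1^{k-1})}$ (the permutation representation, not the reflection representation $\mathfrak{h}^*$), so $C(k,a-k,b-k;q)$ is a sum of two hook-isotypic Hilbert series rather than one---but since both are parity-unimodal about the same center, this does not affect your argument.
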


\begin{remark}
The interested reader may find a growing body of related research on this subject. We wish to highlight to highlight two works that have been written while this paper was in preparation. First, Xin and Zhong \cite[Conjectures 3 and 4]{XinZho2020} prove Theorem \ref{parityunimodal} in the rational Catalan case ($k=0$) for small values of $a$. Although their work currently does not yield the full theorem, it has the advantage of being considerably more elementary than ours. Second, a different conjectural generalization of the Catalan case was given by Billey, Konvalinka, and Swanson \cite[Conjecture 4.3]{BilKonSwa2020} that is related to the major index statistic for standard Young tableaux.
\end{remark}

\subsection{Cyclic Sieving}
\label{necklacecsp}

Recall from Reiner, Stanton, and White \cite{ReiStaWhi2004} that for a set $X$ carrying the action of a cyclic group $\langle \tau \rangle$ of order $m$, and a polynomial $X(q)$ with nonnegative integer coefficients, one says that $(X,X(q),\langle \tau\rangle)$ exhibits the \textbf{cyclic sieving phenomenon} if for every integer $b$ one has that $ \left|\{ x \in X: \tau^b(x)=x \}\right| = \left[ X(q) \right]_{q=\zeta^b}$, where $\zeta=e^{\frac{2 \pi i}{m}}$.  

We are motivated by the case in which $m=2$, so that $\tau$ is an involution; that is,
\begin{align*}
X(1) &= |X|,\\
X(-1)&=\left|\{x \in X: \tau(x)=x\}\right|.
\end{align*}
In this case, $(X,X(q),\tau)$ is said to exhibit \textbf{Stembridge's $q=-1$ phenomenon} \cite{Ste1994}.

This lets us phrase our first result, which follows on the observation in \cite[\S 8]{ReiStaWhi2004} that whenever $\gcd(\alpha)=1$, the $q$-analogue $C(\alpha;q)$ defined in \eqref{main-character} is a polynomial in $q$ with nonnegative coefficients.  As noted above, $C(\alpha;1)=C(\alpha)$ counts the set $X$ of all of $\alpha$-necklaces.  There is a natural involutive action $\tau_0$ on $X$ which reflects a necklace over a line; orbits for this $\tau_0$-action are called \textbf{bracelets}. We say that a bracelet is \textbf{asymmetric} if it is a $\tau_0$-orbit of necklaces of size two.

\begin{restatable}{thm}{braceletcount}
\label{braceletcount}
When $\gcd(\alpha)=1$, the set $X$ of $\alpha$-necklaces along with $X(q):=C(\alpha;q)=\sum_i a_i q^i$ and its $\tau_0$-action by reflection exhibits Stembridge's $q=-1$ phenomenon. That is, 
\begin{align*}
\frac{1}{2}\left( C(\alpha;1) +C(\alpha;-1) \right)&= a_0+a_2+a_4+\cdots, \quad\text{and}\\
\frac{1}{2}\left( C(\alpha;1) -C(\alpha;-1) \right)&= a_1+a_3+a_5+\cdots,
\end{align*}
respectively count the total number of bracelets, and the number of asymmetric bracelets.
\end{restatable}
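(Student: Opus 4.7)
I plan to reduce the two displayed identities to the single equation $C(\alpha;-1)=P(\alpha)$, where $P(\alpha):=|\{x\in X:\tau_0(x)=x\}|$ denotes the number of palindromic necklaces. Each bracelet is either a palindromic necklace or a $\tau_0$-orbit of two non-palindromic necklaces, so the bracelet count equals $(|X|+P(\alpha))/2$ and the asymmetric-bracelet count equals $(|X|-P(\alpha))/2$; since $|X|=C(\alpha;1)$, both halves of the theorem then reduce to $C(\alpha;-1)=P(\alpha)$.

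To compute $P(\alpha)$, I would extend the cyclic action to the dihedral group $D_n=\langle c,\tau\rangle$ acting on $\alpha$-strings, where $\tau$ denotes reversal. The hypothesis $\gcd(\alpha)=1$ makes $\langle c\rangle$ act freely on strings, so every string has $D_n$-stabilizer of size at most $2$ and is fixed by at most one reflection of $D_n$; hence
\[
n\cdot P(\alpha)\;=\;\sum_{r\in D_n\setminus\langle c\rangle}|\Fix(r)|.
\]
A case analysis on reflection axes --- one type (through a single bead) when $n$ is odd; two types (through two opposite beads, or through two opposite edge-midpoints) when $n$ is even, the latter contributing $0$ because $\gcd(\alpha)=1$ rules out all-even content --- produces $P(\alpha)=\binom{\lfloor(n-1)/2\rfloor}{\lfloor\alpha_1/2\rfloor,\ldots,\lfloor\alpha_r/2\rfloor}$ when the number of odd $\alpha_i$ equals $2-(n\bmod 2)$, and $P(\alpha)=0$ otherwise.

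To compute $C(\alpha;-1)$, I would exploit the algebraic identity
\[
C(\alpha;q)\;=\;\frac{1}{[\alpha_i]_q}\binom{n-1}{\alpha_1,\ldots,\alpha_i-1,\ldots,\alpha_r}_{q},
\]
valid for any index $i$ with $\alpha_i\geq 1$ (the $q$-analog of $\tfrac{1}{n}\binom{n}{\alpha}=\tfrac{1}{\alpha_i}\binom{n-1}{\alpha-e_i}$). Because $\gcd(\alpha)=1$ forces at least one $\alpha_i$ to be odd, I would choose such an $i$, making $[\alpha_i]_{-1}=1$ and sidestepping the $0/0$ indeterminacy that $[n]_{-1}=0$ would otherwise produce when $n$ is even. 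Substituting yields $C(\alpha;-1)=\binom{n-1}{\alpha-e_i}_{-1}$, and the iterated $q$-Lucas theorem at $q=-1$ evaluates this $q$-multinomial as $\binom{\lfloor(n-1)/2\rfloor}{\lfloor\alpha/2\rfloor}$ (or as $0$, whenever the parity conditions for nonvanishing fail).

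The main obstacle is really just careful case bookkeeping: I must check that the parity conditions imposed by $q$-Lucas on $\binom{n-1}{\alpha-e_i}_{-1}$ align precisely with those governing the Burnside count of $P(\alpha)$. Across all four relevant scenarios --- $n$ odd with $1$ or $\geq 3$ odd parts, and $n$ even with $2$ or $\geq 4$ odd parts --- both sides will collapse to the same closed form, completing the identification.
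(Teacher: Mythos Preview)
Your argument is correct and takes a genuinely different, more elementary route than the paper. The paper does not compute $C(\alpha;-1)$ or the palindrome count $P(\alpha)$ directly at all; instead it deduces Theorem~\ref{braceletcount} as a special case of the much more general Theorem~\ref{technicalcsp}, which concerns arbitrary subgroups $H\leq S_n$ and elements $\tau\in N_{S_n}(C)$ of suitable cycle type. That general result is proved via Molien's formula: one writes $Y(q)$ as a ratio of Hilbert series of invariant rings, evaluates at a root of unity, and then matches the resulting fixpoint sums against $|\Fix_Y(\tau)|$ using a careful group-theoretic analysis of how $X$-fixpoints project to $Y$-fixpoints (Propositions~\ref{moliencalc} and~\ref{surjectiveprop}). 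The specialization to Young subgroups $H=S_{\alpha_1}\times\cdots\times S_{\alpha_r}$ and $m=2$ then recovers the bracelet statement after checking the required cycle-type avoidance conditions.

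Your approach, by contrast, bypasses the invariant-theory machinery entirely: a direct Burnside count over the $n$ reflections of $D_n$ gives $P(\alpha)$ in closed form, and the algebraic trick of rewriting $C(\alpha;q)=\frac{1}{[\alpha_i]_q}\binom{n-1}{\alpha-e_i}_q$ with $\alpha_i$ odd, followed by iterated $q$-Lucas at $q=-1$, gives the matching closed form for $C(\alpha;-1)$. What you gain is a short self-contained proof requiring nothing beyond elementary enumeration; what you lose is the generality---your method is tailored to Young subgroups and to the reflection $\tau_0$, whereas the paper's framework simultaneously yields cyclic sieving for many other $\tau$ of higher order (Corollary~\ref{primecsp}, for instance) and for non-parabolic $H$.
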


In the example of $\alpha=(3,4)$, one has 
$$ C(\alpha;q)=\frac{1}{[7]_q}\pvec{7}{3}_{q}=1+q^2+q^3+q^4+q^6, $$ 
with $\frac{1}{2}\left( C(\alpha;1) +C(\alpha;-1) \right)=4$ and $\frac{1}{2}\left( C(\alpha;1) +C(\alpha;-1) \right)=1$. This agrees with the fact that the five necklaces shown above give rise to four bracelets, only one of which is asymmetric, namely the bracelet shown here:

\begin{center}
\begin{tikzpicture}[scale=1]
\pgfmathsetlengthmacro{\dot}{5.5pt};
\pgfmathsetlengthmacro{\layerheight}{48pt};
\pgfmathsetlengthmacro{\heightoffset}{\layerheight / 2};

\pgfmathsetmacro{\b}{3};
\pgfmathsetmacro{\a}{4};
\pgfmathsetlengthmacro{\sqsize}{8pt};

\pgfmathsetlengthmacro{\neckR}{\dot*4.5};
\pgfmathsetmacro{\k}{0};
\pgfmathsetmacro{\beads}{\a+\b-\k};
\pgfmathsetmacro{\beadSep}{360/\beads};
\pgfmathsetmacro{\beadOff}{90};
\pgfmathsetlengthmacro{\beadR}{\dot};

\coordinate (top1) at (0,\heightoffset+3*\layerheight);
\coordinate (top2) at (0,\heightoffset+2*\layerheight);
\coordinate (top3) at (0,\heightoffset+1*\layerheight);
\coordinate (TL) at (0pt,\heightoffset);
\coordinate (TR) at (80pt,\heightoffset);
\coordinate (BL) at (0pt,-\heightoffset);
\coordinate (BR) at (80pt,-\heightoffset);
\coordinate (bot3) at (0,-\heightoffset-1*\layerheight);
\coordinate (bot2) at (0,-\heightoffset-2*\layerheight);
\coordinate (bot1) at (0,-\heightoffset-3*\layerheight);

\begin{scope}[shift={(50pt,-\heightoffset)}]
	\coordinate (center) at (0, 0);
	\draw [black,] (center) circle [radius=\neckR];
	\draw [black, fill=black] (center)+(\beadOff-0*\beadSep:\neckR) circle [radius=\beadR*0.8];
	\draw [black, fill=black] (center)+(\beadOff-1*\beadSep:\neckR) circle [radius=\beadR*0.8];
	\draw [black, fill=black] (center)+(\beadOff-5*\beadSep:\neckR) circle [radius=\beadR*0.8];
	\draw [black, fill=white, thin] (center)+(\beadOff-2*\beadSep:\neckR) circle [radius=\beadR];
	\draw [black, fill=white, thin] (center)+(\beadOff-3*\beadSep:\neckR) circle [radius=\beadR];
	\draw [black, fill=white, thin] (center)+(\beadOff-4*\beadSep:\neckR) circle [radius=\beadR];
	\draw [black, fill=white, thin] (center)+(\beadOff-6*\beadSep:\neckR) circle [radius=\beadR];
\end{scope}

\node at (0,-\heightoffset) {\color{black} \Huge $=$};

\begin{scope}[shift={(-50pt,-\heightoffset)}]
	\coordinate (center) at (0, 0);
	\draw [black,] (center) circle [radius=\neckR];
	\draw [black, fill=black] (center)+(\beadOff-0*\beadSep:\neckR) circle [radius=\beadR*0.8];
	\draw [black, fill=black] (center)+(\beadOff-1*\beadSep:\neckR) circle [radius=\beadR*0.8];
	\draw [black, fill=black] (center)+(\beadOff-3*\beadSep:\neckR) circle [radius=\beadR*0.8];
	\draw [black, fill=white, thin] (center)+(\beadOff-2*\beadSep:\neckR) circle [radius=\beadR];
	\draw [black, fill=white, thin] (center)+(\beadOff-4*\beadSep:\neckR) circle [radius=\beadR];
	\draw [black, fill=white, thin] (center)+(\beadOff-5*\beadSep:\neckR) circle [radius=\beadR];
	\draw [black, fill=white, thin] (center)+(\beadOff-6*\beadSep:\neckR) circle [radius=\beadR];
\end{scope}

\end{tikzpicture}
\end{center}

Theorem \ref{braceletcount} will be deduced in Section \ref{gpthy}  from a much more general statement. Notice that the reflection $\tau_0$, thought of as an element of $S_n$, is contained in the normalizer of $C$. In particular, we provides a sufficient condition for other $\tau\in N_{S_n}(C)$ acting on $\a$-necklaces to satisfy a cyclic sieving phenomenon as well. Finally, in Section \ref{futurework} we suggest that even this more general statement is just one instantiation of a much broader framework that we call \textbf{secondary cyclic sieving}.

\section{Proof of Theorem \ref{parityunimodal}}
\label{necklaces}

We begin by fixing some notation. Any $d$-dimensional $G$-representation $V$ gives rise to a \textbf{symmetric algebra} $SV^*$; in coordinates, $SV^*$ is simply a polynomial ring $\c[x_1,\dots, x_d]$, where the variables $x_i$ are basis vectors for $V^*$. The action is given in the natural way: $g\cdot p(x_1,\dots, x_d)=p(g^{-1}x_1,\dots, g^{-1}x_d)$. Note that $SV^*$ is a graded vector space, and each graded piece $S^iV^*$ is a $G$-representation.

The representation $V$ also gives rise to an \textbf{exterior representation} $\wedge V$, which in particular is a graded vector space. Let $e_1,\dots, e_d$ be a basis for $V$; then for $1\leq k\leq d$, the $k^\text{th}$ graded component $\wedge^k V$ is spanned by $e_{i_1}\wedge \cdots \wedge e_{i_k}$, subject to the relations $e_i \wedge e_j = -(e_j\wedge e_i)$, and the multilinearity of $\wedge$. This space inherits the $G$-representation from $V$ diagonally, that is:
$$ g\cdot (e_{i_1}\wedge \cdots \wedge e_{i_k}) = g(e_{i_1})\wedge \cdots \wedge g(e_{i_k}).$$

We also recall the definition of a Hilbert series. For any graded vector space $V=\bigoplus_{i\in\z} V_i$, its \textbf{Hilbert series} is the formal Laurent series $\mathcal H(V;q)=\sum_{i\in\z}\dim(V_i)q^i$. Frequently our vector spaces will be positively graded and finite dimensional, in which case the Hilbert series is simply a polynomial in $q$. 

For the remainder of the section, we let $V=\c^{a-1}$ be the irreducible reflection representation of $S_a$. When the $\otimes$ symbol appears without a subscript, it always means the tensor product of complex vector spaces (or their elements).

\subsection{A Model for the Rational $q$-Schr\"oder Polynomials}

We approach parity-unimodality by defining a notion of $q$-Schr\"oder numbers which is \emph{a priori} different from any $C(\alpha; q)$. The existence of these polynomials is rooted in a ``BGG resolution'' for rational Cherednik algebra modules, which for present purposes can be phrased in more elementary language:

\begin{thm}
\label{hsop}
Let $U\ss SV^*$ be an $(a-1)$-dimensional $S_a$-subrepresentation contained in degree $b$ and denote by $\<U\>$ the ideal generated by the elements of $U$. If $SV^*/\!\<U\>$ is a finite-dimensional vector space, then as a graded $SV^*$-module and $\c[S_a]$-module, it admits a resolution
$$ 0 \leftarrow SV^*/\!\<U\> \leftarrow SV^* \leftarrow \Big(SV^*\otimes {\wedge}^1U\Big)  \leftarrow\cdots\leftarrow \Big(SV^*\otimes {\wedge}^{a-1}U\Big) \leftarrow 0.$$
\end{thm}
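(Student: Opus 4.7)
The plan is to identify the resolution in question as the $S_a$-equivariant Koszul complex on a basis of $U$, and then to verify the regular-sequence hypothesis via Cohen--Macaulayness of $SV^*$. I would begin by choosing (for visualization only) a basis $u_1,\ldots,u_{a-1}$ of $U$; these are homogeneous elements of $SV^*$, each of degree $b$. Since $SV^* \cong \c[x_1,\ldots,x_{a-1}]$ has Krull dimension $a-1$ and $SV^*/\!\<U\>$ is by hypothesis finite-dimensional, the $u_i$ cut out the origin set-theoretically and thus form a \emph{homogeneous system of parameters}. Because polynomial rings are Cohen--Macaulay, every homogeneous system of parameters in $SV^*$ is automatically a regular sequence; this is the central algebraic input.

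Granted the regular-sequence property, the standard theorem on Koszul resolutions tells us that the complex with $k$-th term $SV^* \otimes \wedge^k U$ and differential
$$ f \otimes (u_{i_1}\wedge\cdots\wedge u_{i_k}) \;\longmapsto\; \sum_{j=1}^{k} (-1)^{j+1}\, (fu_{i_j}) \otimes \bigl(u_{i_1}\wedge\cdots\widehat{u_{i_j}}\cdots\wedge u_{i_k}\bigr) $$
is acyclic in positive homological degree and has zeroth homology $SV^*/\!\<U\>$. Placing $\wedge^k U$ in pure degree $kb$ makes each differential homogeneous of degree $0$, since multiplication by $u_{i_j}$ raises the $SV^*$-degree by $b$ while one wedge factor of degree $b$ is removed.

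The remaining task is to promote this to a resolution of $\c[S_a]$-modules. Here I would simply note that the entire Koszul construction is intrinsic to the embedded subspace $U \ss SV^*$: each $\wedge^k U$ is naturally an $S_a$-representation because $U$ is, and the displayed differential is built solely from the inclusion $U\hookrightarrow SV^*$ and the ring multiplication on $SV^*$, both of which are $S_a$-equivariant. The main obstacle, such as it is, amounts to confirming that classical Koszul exactness --- usually stated for complexes of graded $SV^*$-modules --- survives this extra enrichment. But exactness is a property of the underlying sequence of linear maps, and the $S_a$-action simply commutes with the already-exact differentials, so no modification of the argument is required.
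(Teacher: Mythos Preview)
Your proposal is correct and follows essentially the same route as the paper: identify the resolution as the Koszul complex on a basis of $U$, deduce exactness from the fact that a homogeneous system of parameters in the Cohen--Macaulay ring $SV^*$ is a regular sequence, and then verify $S_a$-equivariance separately. Your basis-free justification of equivariance (the differential is built from the $S_a$-equivariant inclusion $U\hookrightarrow SV^*$ and ring multiplication) is, if anything, cleaner than the paper's appeal to ``straightforward, if somewhat lengthy, calculations.''
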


This result is not new. An early proof of a similar statement given by Berest, Etingof, and Ginzburg \cite[Theorem 2.4]{BerEtiGin2003}, which has since been generalized considerably (for instance, \cite[\S 3]{EtiMa2010}) Still, for the sake of completeness, we sketch the proof here.

\begin{proof}
Let $\theta_1,\dots,\theta_{a-1}$ be a basis for $U$. Observe that $SV^*\otimes U$ is a free $SV^*$-module (by multiplication on the left) with basis $\{1\otimes\theta_i\}$; where there is no risk of confusion we will also call this basis $\{\theta_i\}$. The Koszul complex $K(\theta_1,\dots, \theta_{a-1})$ over the ring $SV^*$ begins with the canonical quotient of $SV^*$ onto the quotient, and the higher terms involve exterior powers of $U$:

$$ 0 \leftarrow SV^*/\!\<U\> \leftarrow SV^*\leftarrow \Big(SV^*\otimes {\wedge}^1U\Big)  \leftarrow\cdots\leftarrow \Big(SV^*\otimes {\wedge}^{a-1}U\Big) \leftarrow 0.$$
The leftmost nonzero map in this complex is the canonical quotient, and the others are the usual $SV^*$-linear differentials:
$$ d_k: \theta_{i_1}\wedge \cdots \wedge \theta_{i_k} ~\mapsto~ \sum_{j=1}^k (-1)^{k+1}\iota(\theta_{i_j})\cdot \theta_1\wedge\cdots\wedge \widehat{\theta_{i_j}}\wedge\cdots\wedge \theta_{i_k}$$
where the hat denotes a factor which is omitted, and $\iota$ is the canonical inclusion $U\to SV^*$.

As usual, the Koszul complex will be exact, and hence a resolution of graded $SV^*$-modules, if $\theta_1,\dots,\theta_{a-1}$ is a maximal $SV^*$-regular sequence. This is true because $SV^*$ is Cohen-Macaulay and $\theta_1,\dots,\theta_{a-1}$ is a homogeneous system of parameters, which in turn follows from the fact that $SV^*/\!\<U\>$ is finite-dimensional.

To complete the proof, we need to show that the maps in the Koszul complex are $S_a$-equivariant. This follows directly from the definitions after straightforward, if somewhat lengthy, calculations.
\end{proof}

Theorem \ref{hsop} is a conditional result, computing a resolution when provided with a ``nice'' $S_a$-representation $U$. Dunkl proved that if $b$ is coprime to $a$, then such a $U$ does actually exist, and moreover it is essentially unique, with $U\is V^*$ as $S_a$-representations. We will record the necessary details in the next subsection as Theorem \ref{hsopsexist}; until then, the skeptical reader may regard the following results as conditional as well.

Although explicit formulas for the $\theta_i$ are tricky, the resulting quotient space $SV^*/\!\<U\>$ is well-studied. For instance, it is the space of ``rational parking functions'' as defined by Armstrong, Loehr, and Warrington \cite{ArmLoeWar2016}. In the following subsection we will introduce the rational Cherednik algebra, and it is true that $SV^*/\!\<U\>$ is irreducible as a module over this algebra (see, for instance, Chmutova and Etingof \cite{ChmEti2003}). In the latter context it is often called $L_{b/a}(1)$; we adopt this notation here.

We will not be interested in $L_{b/a}(1)$ per se, but rather the intertwiners between it and the exterior algebra of the defining (permutation) representation $\c^a$. Precisely, for $\gcd(a,b)=1$ and $0\leq k\leq a<b$, the \textbf{rational $q$-Schr\"oder numbers} $C^k_{a,b}(q)$ are defined to be a normalized Hilbert series:
$$ q^{\binom{k}{2}} C^k_{a,b}(q) = \mathcal H\Big( \!\Hom_{\Bbb C [S_a]}\!\big(\wedge^k\!\c^a
, L_{b/a}(1)\big) ; \, q\Big). $$

\begin{prop}
\label{schroderformula}
Let $a,b$, and $k$ be nonnegative integers satisfying $\gcd(a,b)=1$ and $k \leq a<b$. Then the rational $q$-Schr\"oder number $C^k_{a,b}(q)$ coincides with the rational $q$-Schr\"oder polynomial $C(k, a-k, b-k; q)$.
\end{prop}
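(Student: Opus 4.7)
The plan is to compute the Hilbert series of $\Hom_{\mathbb{C}[S_a]}(\wedge^k\mathbb{C}^a,\,L_{b/a}(1))$ using the BGG resolution from Theorem~\ref{hsop}, and then match the result with the known formula for $C(k,a-k,b-k;q)$. Since $\mathbb{C}[S_a]$ is semisimple in characteristic zero, $\wedge^k\mathbb{C}^a$ is projective and the functor $\Hom_{\mathbb{C}[S_a]}(\wedge^k\mathbb{C}^a,\,-)$ is exact, so it carries the BGG resolution to an exact complex of graded vector spaces. Using Theorem~\ref{hsopsexist} to identify $U\cong V^*$ concentrated in degree $b$, the alternating sum of Hilbert series gives
\begin{equation*}
q^{\binom{k}{2}}\,C^k_{a,b}(q)\;=\;\sum_{i=0}^{a-1}(-1)^i\,q^{ib}\,\mathcal{H}\!\bigl(\Hom_{\mathbb{C}[S_a]}(\wedge^k\mathbb{C}^a,\,SV^*\otimes\wedge^iV^*);\,q\bigr).
\end{equation*}

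To evaluate each summand, I would use $\mathbb{C}^a\cong\mathbb{C}\oplus V$ to write $\wedge^k\mathbb{C}^a\cong\wedge^kV\oplus\wedge^{k-1}V$, where each $\wedge^jV$ is the hook irreducible $\chi^{(a-j,1^j)}$ of $S_a$. The graded multiplicity of a hook irrep $\chi^\lambda$ in $SV^*$ is the fake degree $f^\lambda(q)$ divided by $\prod_{i=2}^a(1-q^i)$, with $f^{(a-j,1^j)}(q)=q^{\binom{j+1}{2}}\binom{a-1}{j}_q$; combined with the extra $\wedge^iV^*$ factor, each summand becomes an explicit $q$-rational function. A slicker route is to bundle all exterior degrees into the Molien-style generating identity
\[
\sum_{\lambda}\chi^\lambda(g)\,\mathcal{H}\!\bigl(\Hom_{S_a}(\chi^\lambda,\,SV^*\otimes\wedge V^*);\,q,t\bigr)\;=\;\frac{\det_{V^*}(1+tg)}{\det_{V^*}(1-qg)},
\]
extract the $\wedge^k\mathbb{C}^a$-isotypic component, and substitute $t=-q^b$, so that the numerator becomes $\det_{V^*}(1-q^b g)$; this reduces the problem to a sum over conjugacy classes of $S_a$ indexed by cycle type.

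The main obstacle will be the final combinatorial identification: one must verify that the resulting alternating sum equals $q^{\binom{k}{2}}\,\frac{1}{[a+b-k]_q}\binom{a+b-k}{k,\,a-k,\,b-k}_q$. I expect this identity to reduce, after combining the hook contributions or performing the cycle-type sum, to a $q$-analog of the Vandermonde--Chu or Pfaff--Saalsch\"utz identity, producing the desired $q$-multinomial coefficient almost by inspection. The delicate bookkeeping is tracking the $q$-shifts accumulated from the exterior factors and confirming that the normalization $q^{\binom{k}{2}}$ absorbs exactly the extra powers of $q$ coming from the hook fake degrees. The hypothesis $\gcd(a,b)=1$ enters only via Theorem~\ref{hsopsexist} to guarantee existence of $U$; once this is in hand, the rest of the argument is computational.
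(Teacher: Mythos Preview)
Your outline is correct and follows the same architecture as the paper: apply $\Hom_{\mathbb{C}[S_a]}(\wedge^k\mathbb{C}^a,-)$ to the resolution of Theorem~\ref{hsop}, identify $U\cong V^*$ in degree $b$ via Theorem~\ref{hsopsexist}, package the exterior powers with a parameter $t$, and substitute $t=-q^b$. You also correctly decompose $\wedge^k\mathbb{C}^a$ into the two hook irreducibles.

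The difference lies in how the isotypic multiplicities in $SV^*\otimes\wedge^\bullet V^*$ are computed. The paper invokes Molchanov's closed product formula (Theorem~\ref{kirkman}), which for any partition $\lambda$ gives
\[
\sum_{i,j}\dim\Hom_{\mathbb{C}[S_a]}(S^\lambda,S^iV^*\otimes\wedge^jV)\,q^it^j
=\frac{1-q}{1+t}\prod_{(i,j)\in\lambda}\frac{q^{i-1}+tq^{j-1}}{1-q^{h(i,j)}}.
\]
Substituting $t=-q^b$ and specializing to the two hook shapes yields the $q$-multinomial product for $C(k,a-k,b-k;q)$ immediately, with no auxiliary $q$-identity needed. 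Your route through fake degrees or a Molien-type sum over conjugacy classes would eventually reproduce this, but the ``final combinatorial identification'' you anticipate---some $q$-Vandermonde or $q$-Saalsch\"utz step---is exactly what Molchanov's formula absorbs. So your proposal is sound but leaves the hardest step as a promissory note; citing Theorem~\ref{kirkman} turns that step into a two-line specialization.
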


The proof is primarily a computation using Theorem \ref{hsop} together with \cite[Theorem 1]{KirPak1990}, corrected and simplified by Molchanov in \cite{Mol1992}. Recall that a list of nonnegative integers $\lambda=(\l_1,\l_2,\dots, \l_r)$ that sums to $n$ is said to be a \textbf{partition} of $n$, written $\l\vdash n$, if it is also non-increasing: $\l_1\geq \l_2\geq\cdots\geq \l_r$. A foundational result in the representation theory of $S_n$ is that partitions of $n$ are in  explicit bijective correspondence with irreducible representations of $S_n$. 

We write $S^\lambda$ for the representation corresponding to $\lambda$, and $\chi^\lambda$ for its character. Of particular interest for us are the (dual) irreducible reflection representation $V^*\is S^{(a-1,1)}$, the permutation representation $\c^a\is S^{(a-1,1)} \oplus S^{(a)}$, and also its higher exterior powers $\wedge^k \c^a \is S^{(a-k, 1^k)} \oplus S^{(a-k+1, 1^{k-1})}$.

To state the Molchanov result, the following notation will be useful. Given a partition $\l=(\l_1,\dots\l_r)$, write $(i,j)\in\l$ to mean that $i$ and $j$ are positive integers with $i\leq r$ and $j\leq \lambda_i$. This notation is justified by thinking of $\lambda$ as its Ferrers diagram in French notation:

\vspace{-0.1in}
\begin{center}
\begin{tikzpicture}[thick,scale=0.8, every node/.style={scale=0.8},baseline={([yshift=-.5ex]current bounding box.center)}]

\begin{scope}[shift={(-.5,-.5)}]
  \fill[cyan!25]
  (1,1)
  \foreach \iter [count=\y] in {6,5,5,3,1,0} {
      \ifnum\y > 1
        -- ++(0,1)
      \fi
      -- (\iter+1,\y)    
  } 
  -- ++(0,1)
  |- (1,1);
\end{scope}


\begin{scope}[shift={(-.5,-.5)}]
  \fill[gray!40]
  (2,3)
  \foreach \iter [count=\y] in {4,1,0} {
      \ifnum\y > 1
        -- ++(0,1)
      \fi
      -- (\iter+2,\y+2)    
  } 
  -- ++(0,1)
  |- (2,3);

  \fill[gray!80]
  (2,3) -- (2,4) -- (3,4) -- (3,3) -- (2,3);

  \draw[help lines] (1,2)--(7,2);
  \draw[help lines] (1,3)--(6,3);
  \draw[help lines] (1,4)--(6,4);
  \draw[help lines] (1,5)--(3,5);
  \draw[help lines] (2,1)--(2,5);
  \draw[help lines] (3,1)--(3,5);
  \draw[help lines] (4,1)--(4,4);
  \draw[help lines] (5,1)--(5,4);
  \draw[help lines] (6,1)--(6,2);

\end{scope}

  \draw[fill] (1,1) circle [radius=0.1333];
  \draw[fill] (2,1) circle [radius=0.1333];
  \draw[fill] (3,1) circle [radius=0.1333];
  \draw[fill] (4,1) circle [radius=0.1333];
  \draw[fill] (5,1) circle [radius=0.1333];
  \draw[fill] (6,1) circle [radius=0.1333];
  \draw[fill] (1,2) circle [radius=0.1333];
  \draw[fill] (2,2) circle [radius=0.1333];
  \draw[fill] (3,2) circle [radius=0.1333];
  \draw[fill] (4,2) circle [radius=0.1333];
  \draw[fill] (5,2) circle [radius=0.1333];
  \draw[fill] (1,3) circle [radius=0.1333];
  \draw[fill] (2,3) circle [radius=0.1333];
  \draw[fill] (3,3) circle [radius=0.1333];
  \draw[fill] (4,3) circle [radius=0.1333];
  \draw[fill] (5,3) circle [radius=0.1333];
  \draw[fill] (1,4) circle [radius=0.1333];
  \draw[fill] (2,4) circle [radius=0.1333];
  \draw[fill] (3,4) circle [radius=0.1333];
  \draw[fill] (1,5) circle [radius=0.1333];

\begin{scope}[shift={(-.5,-.5)}]
  \draw[line width=2pt]
  (1,1)
  \foreach \iter [count=\y] in {6,5,5,3,1}{
      \ifnum\y > 1
        -- ++(0,1)
      \fi
      -- (\iter+1,\y)    
  } 
  -- ++(0,1)
  -- (1,6) -- (1,1);

\end{scope}

\draw[->, thick] (-.5,0)--(6.5,0) node[right]{$x$};
\draw[->, thick] (0,-.5)--(0,5.5) node[above]{$y$};

  \end{tikzpicture}
\end{center}

When $(i,j)\in\l$, the \textbf{hook-length} $h(i,j)$ is defined as $(\l_i-j)+(\l'_j-i)+1$, where $\lambda'$ is the conjugate partition given by $(i,j)\in\l \Leftrightarrow (j,i)\in \l'$. The diagram above illustrates this definition, in particular that for $\l=(6,5,5,3,1)$ we have $h(2,3)=5$.

\begin{thm}[Molchanov \cite{Mol1992}]
\label{kirkman}
Let $S^iV^*$ be the $i^\text{th}$ graded component of $SV^*$. Then
$$ \sum_{i,j\geq 0} \dim\Hom_{\Bbb C [S_a]}\!\big(S^\lambda, S^iV^*\otimes \wedge^j V\big)\cdot q^i t^j ~=~ \frac{1-q}{1+t}\prod_{(i,j)\in\lambda} \frac{q^{i-1}+tq^{j-1}}{1-q^{h(i,j)}}. $$

\end{thm}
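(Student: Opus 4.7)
The plan is to reduce, via Molien's formula and character orthogonality, to a plethystic Schur function identity, and then to prove that identity by Jacobi--Trudi combined with the $q$-binomial theorem.

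\textbf{Step 1 (bigraded character).} For $g \in S_a$ of cycle type $\mu = (\mu_1, \dots, \mu_\ell)$, the $S_a$-module isomorphism $\c^a \is V \oplus \c$ gives
$$\det\nolimits_V(1-qg) \;=\; \frac{\prod_k (1-q^{\mu_k})}{1-q}, \qquad \det\nolimits_V(1+tg) \;=\; \frac{\prod_k (1-(-t)^{\mu_k})}{1+t}.$$
Applying Molien's formula to $SV^*$ (symmetric grading) and to $\wedge V$ (exterior grading), the bigraded character of $SV^* \otimes \wedge V$ at $g$ is
$$\sum_{i,j \geq 0} \chi_{S^iV^* \otimes \wedge^j V}(g)\, q^i t^j \;=\; \frac{\det_V(1+tg)}{\det_V(1-qg)} \;=\; \frac{1-q}{1+t} \prod_k \frac{1-(-t)^{\mu_k}}{1-q^{\mu_k}}.$$

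\textbf{Step 2 (orthogonality and plethystic reformulation).} Averaging $\chi^\lambda(g)$ against this expression over $S_a$ and collecting conjugacy classes (with $z_\mu = \prod_i i^{m_i} m_i!$) yields
$$\sum_{i,j}\dim \hom{\c[S_a]}(S^\lambda,\, S^iV^*\otimes\wedge^jV)\,q^it^j \;=\; \frac{1-q}{1+t}\sum_{\mu \vdash a}\frac{\chi^\lambda_\mu}{z_\mu}\prod_k \frac{1-(-t)^{\mu_k}}{1-q^{\mu_k}}.$$
Define the plethystic alphabet $X$ by $p_k[X]=(1-(-t)^k)/(1-q^k)$; concretely, $X$ is the super-alphabet with bosonic letters $\{q^i\}_{i\geq 0}$ and fermionic letters $\{tq^i\}_{i\geq 0}$. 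The Frobenius characteristic expansion $s_\lambda=\sum_\mu(\chi^\lambda_\mu/z_\mu)p_\mu$ then identifies the inner sum as $s_\lambda[X]$, reducing the theorem to the symmetric-function identity
$$s_\lambda[X] \;=\; \prod_{(i,j)\in\lambda}\frac{q^{i-1}+tq^{j-1}}{1-q^{h(i,j)}}.$$

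\textbf{Step 3 (super hook-content identity).} This last identity is a super-alphabet analogue of Stanley's hook-content formula, and its proof is the main obstacle. My plan is Jacobi--Trudi: the super-$H$ generating function combined with the $q$-binomial theorem gives
$$\sum_{k\geq 0}h_k[X]\,z^k \;=\; \prod_{i\geq 0}\frac{1+tq^iz}{1-q^iz} \;=\; \sum_{k\geq 0}\frac{\prod_{m=0}^{k-1}(1+tq^m)}{\prod_{m=1}^{k}(1-q^m)}\,z^k,$$
so $h_k[X]=(-t;q)_k/(q;q)_k$. Substituting into $s_\lambda[X]=\det(h_{\lambda_i-i+j}[X])_{1\leq i,j\leq \ell(\lambda)}$ yields a determinant whose rows share large common factors; pulling these out reduces the claim to evaluating a determinant of rational functions in $q$ and $t$, which I expect to factor as the stated product over cells via a Vandermonde/Cauchy-style manipulation. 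The specialization $t=0$ must recover Macdonald's derivation of the classical formula $s_\lambda[1/(1-q)] = q^{n(\lambda)}/\prod(1-q^{h(i,j)})$, providing a useful consistency check. An alternative route, if the determinantal evaluation proves unwieldy, is to apply $s_\lambda[X+Y]=\sum_\mu s_{\lambda/\mu}[X]\cdot s_\mu[Y]$ to split the bosonic and fermionic parts of $X$ and reduce to two applications of the classical hook-content formula, or to give a bijective proof in terms of super-semistandard tableaux. In all approaches, Steps 1 and 2 are essentially formal; the real work is confined to Step 3.
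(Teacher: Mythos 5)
The first thing to note is that the paper offers no proof of this statement at all: Theorem \ref{kirkman} is quoted as a black box from Kirillov--Pak and Molchanov, so there is no internal argument to compare yours against. Judged on its own terms, your Steps 1 and 2 are correct and complete. The computation of $\det_V(1-qg)$ and $\det_V(1+tg)$ from $\c^a\is V\oplus\c$, the pointwise Molien/graded-trace formula, and the orthogonality step all check out, and the identification of the multiplicity series with $\frac{1-q}{1+t}\,s_\lambda[X]$ for the super alphabet with $p_k[X]=(1-(-t)^k)/(1-q^k)$ is right (I verified the cases $\lambda=(1),(2),(1,1)$ against the stated hook product, including the row/column convention $q^{i-1}+tq^{j-1}$ with $i$ the row index). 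Your formula $h_k[X]=(-t;q)_k/(q;q)_k$ via the $q$-binomial theorem is also correct.

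The gap is Step 3, which is where all the content of the theorem lives. ``I expect [the determinant] to factor as the stated product via a Vandermonde/Cauchy-style manipulation'' is a statement of hope, not a proof: the Jacobi--Trudi determinant with entries $(-t;q)_{\lambda_i-i+j}/(q;q)_{\lambda_i-i+j}$ does evaluate to the hook product, but getting there requires the standard hook-length multiset lemma (relating $\{h(i,j):j\le\lambda_i\}$ to $\{1,\dots,\lambda_i+\ell-i\}\setminus\{\lambda_i-\lambda_k-i+k\}$) and a genuine computation; none of that is carried out. Your proposed fallback is worse than the main route: splitting the super alphabet and writing $s_\lambda[X+Y]=\sum_\mu s_{\lambda/\mu}[X]s_\mu[Y]$ does not ``reduce to two applications of the classical hook-content formula,'' because the principal specialization of a \emph{skew} Schur function is not a product, so the sum over $\mu$ has no reason to telescope into the hook product. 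The honest ways to close Step 3 are either to carry out the determinant evaluation in full, or to cite the identity directly --- it is the known principal specialization of the super (hook) Schur function, e.g.\ Macdonald, \emph{Symmetric Functions and Hall Polynomials}, Ch.\ I, \S5, Ex.\ 23, or the Berele--Regev theory --- which would make your argument a legitimate alternative derivation of Molchanov's formula. As written, the hardest step is asserted rather than established.
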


\begin{proof}[Proof (of Proposition \ref{schroderformula})]

From Theorem \ref{hsop}, we obtain the following identity on graded $\c[S_a]$-characters:
$$ \chi_{L_{b/a}(1)} = \sum_{j=0}^{a-1} (-1)^j\chi_{SV^*\otimes \wedge^{j} U}. $$
From this we conclude that a similar identity holds for each $\lambda$-isotypic component:
$$ \mathcal H \Big( \!\Hom_{\c[S_a]}\big(S^\lambda, L_{b/a}(1)\big); ~q\Big) ~=~ \sum_{j=0}^{a-1} (-1)^j \, \mathcal H \Big( \!\Hom_{\c[S_a]}\big(S^\lambda, SV^*\otimes \wedge^j U\big); ~q\Big). $$

Recall that using Dunkl's construction (again, recorded as Theorem \ref{hsopsexist}), $U\is V^*$ as ungraded $S_a$-representations. Therefore, the right-hand side is almost in the same form as Theorem \ref{kirkman}, except that we have lost $t^j$, picked up a factor of $(-1)^j$, and in the exterior powers, the degree-$1$ elements of $V^*$ are now the degree-$b$ elements of $U$. These differences are not so severe; they simply amount to evaluating Theorem \ref{kirkman} at $t=-q^b$:
\begin{equation}
\label{cherednik-hilbert-equality}
 \mathcal H \Big( \!\Hom_{\c[S_a]}\big(S^\lambda, L_{b/a}(1)\big); ~q\Big) ~=~ \frac{1-q}{1-q^b}\prod_{(i,j)\in\lambda} \frac{q^{i-1}-q^{j-1+b}}{1-q^{h(i,j)}}\,.
\end{equation}

To complete the calculation of the rational $q$-Schr\"oder numbers, we note that the character of $\wedge^k \c^a \is \wedge^k V^* \oplus \wedge^{k-1} V^*$ is $\chi^{({a-k},1^{k})}+\chi^{({a-k+1},1^{k-1})}$ for all $k>0$, and also for $k=0$ under the reasonable convention that $\chi^{(a+1,1^{-1})}:=0$. By applying the above formula twice with these hook shapes and collecting common terms, we obtain the product formula for $C(k, a-k, b-k;q)$.
\end{proof}

\subsection{The $\mathfrak{sl}_2(\c)$ Action on $L_{b/a}(1)$}

Given an algebra $A$ equipped with an action of a group $G$, the \textbf{semidirect product} $A\rtimes G$ is the algebra which as a vector space is $A\otimes \c[G]$, and whose product structure given by $ (a\otimes g)\cdot (b\otimes h) = ag(b) \otimes gh$.

Let $y_1,\dots, y_{a-1}$ and $x_1,\dots, x_{a-1}$ be respectively a basis for $V$ and its dual basis. The \textbf{rational Cherednik algebra} is $H_{b/a} = (S(V\oplus V^*)\rtimes S_a)/I$, where $I$ is the ideal generated by the following relations ($1\leq i,j\leq a-1$):
\begin{align*}
x_ix_j &=x_jx_i \\
y_iy_j & =y_jy_i \\
x_iy_j &= y_jx_i \qquad~\text{  for all } i\neq j \\
x_iy_i-y_ix_i &= 1 - \ds \frac{b}{a}\sum_{\substack{k=1 \\ k\neq i}}^{a} (ik).
\end{align*}

This algebra can be given a grading via $\deg(w)=0$ for all $w\in S_n$, and for the variables, $\deg(x_i)=1$ and $\deg(y_i)=-1$. 

Moreover, we may regard $SV^*$ as an $H_{b/a}$-module in the following way: Let $\c$ be the trivial $S_a$-representation, and extend it to an $(SV\rtimes S_a)$-module by letting each $y_i$ act as zero. Then there is an isomorphism of $(SV^*\rtimes S_a)$- and hence $\c[S_a]$-modules: $SV^* \is H_{b/a}\otimes_{SV\rtimes S_a} \c$. (Note that this does not mean that left-multiplication by $y_i$ acts on $SV^*$ as zero, because before $y_i$ acts as zero on the right tensor factor $\c$, it must first be commuted past each $x_j$ in the left tensor factor $H_{b/a}$, which may introduce $\c[S_a]$ terms. We have avoided discussing the explicit action of $SV$ on $SV^*$ because it is somewhat involved; but see \cite[\S 2.5 and \S 3.1]{EtiMa2010} for details.) 

With this module structure in hand, we may finally record the existence result, primarily due to Dunkl:

\begin{thm}[Dunkl {\cite[\S 5]{Dun1998}, {\cite[\S 6]{Dun2005}}} \& Dunkl-Opdam {\cite[Prop 2.34]{DunOpd2003}}]
\label{hsopsexist}
For any $a< b$ where $b$ is not a multiple of $a$, there is a space $U$ of degree-$b$ polynomials in $SV^*$ such that $U\is V^*$ as $S_a$-representations, and $y_i\cdot U = 0$ for all $1\leq i\leq a-1$.

In particular, the ideal $\<U\>\subset SV^*$ is an $H_{b/a}$-submodule. If moreover $\gcd(a,b)=1$, then the quotient $SV^*/\!\<U\>$ is finite-dimensional.
\end{thm}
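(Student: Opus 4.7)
The plan is to establish the three distinct claims in turn: first produce a candidate $U\subset SV^*$ with the correct $S_a$-type and degree; second verify that the ideal $\langle U\rangle$ is closed under the lowering operators $y_i$; and third check finite-dimensionality of the quotient. The first of these is by far the deepest, and is essentially where all of the Cherednik-algebra input is concentrated; the other two are formal consequences of the commutation relations in $H_{b/a}$ and of the Cohen--Macaulay property of $SV^*$.

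For the existence of $U$, call $v\in SV^*$ \emph{singular} if $y_i\cdot v = 0$ for every $i$.  Since the $y_i$ are homogeneous of degree $-1$ and their span is $S_a$-stable, singular vectors form a graded $S_a$-subrepresentation of $SV^*$. I would produce a singular copy of $V^*$ sitting in degree $b$ in one of two parallel ways.  Dunkl's direct route specializes the non-symmetric Jack polynomials at parameter $c = b/a$, where certain denominators in their recursion vanish simultaneously; the resulting ``residues'' carry a natural $S_a$-action, and an explicit Jack-polynomial computation isolates a singular copy of $V^*$ in degree exactly $b$. The more abstract route works inside category $\mathcal O$ for $H_{b/a}$: the standard module $\Delta(\mathrm{triv})\cong SV^*$ carries a contravariant Shapovalov form whose radical is its maximal proper submodule, and a Casimir eigenvalue calculation combined with the known reducibility locus of $\Delta(\mathrm{triv})$ forces the first singular irreducible to appear as $V^*$ in degree $b$ whenever $a\nmid b$.

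Once $U$ has been produced, the submodule property of $\langle U\rangle$ follows from a straightforward commutator induction on polynomial degree. For $u\in U$ and $p\in SV^*$ the algebra identity gives $y_i\cdot(p u) = [y_i,p]\cdot u + p\cdot (y_i\cdot u) = [y_i,p]\cdot u$. The commutator $[y_i,p]$ unpacks via the defining relations as a sum of terms of the form $p'\cdot w$ with $p'\in SV^*$ of strictly smaller $x$-degree and $w\in \mathbb{C}[S_a]$; since $U$ is $S_a$-stable, each such term applied to $u$ lands in $\langle U\rangle$ by induction, with the base case $\deg p = 0$ being trivial.

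For the final claim, note that $\dim U = a-1 = \dim V$, so by Cohen--Macaulayness of $SV^*$ the quotient is finite-dimensional exactly when the elements of $U$ form a homogeneous system of parameters, equivalently when their common zero locus in $V$ is $\{0\}$. This is precisely the assertion that $L_{b/a}(1)$ is finite-dimensional, which holds under $\gcd(a,b)=1$ by classical results on the type-$A$ rational Cherednik algebra. The main obstacle throughout is the first step: identifying the explicit singular copy of $V^*$ in the correct degree is the entire analytic content of Dunkl's original argument, whereas the submodule property and the finite-dimensionality reduce to routine commutator and dimension-theoretic bookkeeping.
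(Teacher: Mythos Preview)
The paper does not prove this theorem at all: it is stated as a cited result, attributed to Dunkl and Dunkl--Opdam, and used as a black box to feed into Theorem~\ref{hsop} and the proof of Theorem~\ref{parityunimodal}. So there is no ``paper's own proof'' to compare your proposal against.

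That said, your sketch is a reasonable outline of how the cited literature proceeds. A few comments. Your description of the two routes to the singular copy of $V^*$ (Dunkl's explicit Jack-polynomial construction versus the category-$\mathcal O$/Shapovalov approach) is accurate at the level of a summary, though of course the real analytic work lives inside those references. Your commutator argument for the submodule property is essentially correct: the relations in $H_{b/a}$ give $[y_i,x_j]\in\mathbb{C}[S_a]$, so an induction on $\deg p$ shows $[y_i,p]$ is a sum of terms $p'\,w$ with $\deg p'<\deg p$ and $w\in\mathbb{C}[S_a]$, and $S_a$-stability of $U$ closes the loop. The one place to be careful is your final paragraph: writing ``this is precisely the assertion that $L_{b/a}(1)$ is finite-dimensional'' is circular as phrased, since in this paper $L_{b/a}(1)$ is \emph{defined} to be $SV^*/\langle U\rangle$. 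What you mean is that the finite-dimensionality is itself one of the cited results (specifically from Dunkl--Opdam), not something you are deducing; it would be cleaner to say so directly rather than route through the name $L_{b/a}(1)$.
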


Furthermore, we are now ready to prove Theorem \ref{parityunimodal}:
\parityunimodal*

\begin{proof}
This argument is loosely based on Haiman \cite[\S 7]{Hai1994}, which uses simpler tools to obtain the result in the $b=a+1$ case.

As in \cite[\S 3]{BerEtiGin2003-1}, there is an action of $\mathfrak{sl}_2(\c) = \c\{e,f,h\}$ on $H_{b/a}$ given by left multiplication of certain elements:
\begin{align*}
e &= -\frac12\sum_{i=1}^{a-1} x_i^2 \\
f &= \frac12\sum_{i=1}^{a-1} y_i^2 \\
h &= \sum_{i=1}^{a-1} x_iy_i + (a-1)\left(\frac12-\sum_{i=1}^{a-1}(i,i+1)\right).
\end{align*}

Hence, the element $e$ acts on $U\ss SV^*$ by the natural left-multiplication, and $h$ acts as left-multiplication by $(a-1)/2 + (a-1)\sum_{i=1}^{a-1} (i, i+1) \in \c[S_a]$; in particular, since $U$ is an $S_a$-subrepresentation of $SV^*$ we have $h\cdot\theta\in U$ as well. Finally, recalling from Theorem \ref{hsopsexist} that $U\ss SV^*$ consists exclusively of vectors $u$ such that $y\cdot u=0$, we have that $f$ acts on $U$ as zero.

Therefore the $\mathfrak{sl}_2(\c)$ action preserves $\<U\>$, and so $L_{b/a}(1)=SV^*/\!\<U\>$ admits an $\mathfrak{sl}_2(\c)$ action. Moreover, this action commutes with the action of $S_a$, because $e$ and $f$ are clearly invariant under permuting indices (and thus, so is $h=ef-fe$).

Any finite-dimensional $\mathfrak{sl}_2(\c)$-module $V$ has a \textbf{formal character} $\text{ch}(V) = \sum \dim(V_\ell)q^\ell$, where $V_\ell\ss V$ is the space of all elements having weight $\ell$. By typical Lie theory arguments (see, e.g. \cite[Theorem 15]{Sta1989}), a formal character is a Laurent polynomial that is symmetric and parity-unimodal about $q^0$.

The signficance for our situation is that the grading on $H_{b/a}$ descends to a grading on $L_{b/a}(1)$, and since $h$ preserves the grading on $H_{b/a}$ it also does so on $L_{b/a}(1)$. It follows that for any graded $V\ss L_{b/a}(1)$, weight differs from degree only by a constant shift. We conclude that $\text{ch}(V)$ is the Hilbert series of $V$ up to a factor of some $q^c$. Since both $\mathfrak{sl}_2(\c)$ and $\c[S_a]$ are semisimple, we may write
$$L_{b/a}(1) = \bigoplus_{\l\vdash a} \left(\bigoplus_{\ell\geq 0} m_{\ell,\l}V^m\right)\otimes S^\l.$$

Hence the space of intertwiners of $S^\l$ with $L_{b/a}(1)$ has (shifted) Hilbert function $P_\l(q)$, where the $P_\l$ are each Laurent polynomials, symmetric and parity-unimodal about $q^0$. In particular, $q^{-c}C_{a,b}^k(q) = P_{({a-k},1^{k})}(q)+P_{(a-k-1,1^{k+1})}(q)$ is symmetric and parity-unimodal about $q^0$, which is equivalent to the desired statement.
\end{proof}

\section{Group-Theoretic Formulation}
\label{gpthy}

Turning our attention to cyclic sieving, we begin by reviewing a cyclic sieving phenomenon that specializes a result from \cite{ReiStaWhi2004}. To avoid some trivialities, we assume for the remainder of the paper that $n\geq 3$.

Given any subgroup $H$ of $S_n$, let $X$ be the coset space $X=S_n/H$, and $C=\langle c \rangle$ be the cyclic subgroup of $S_n$ generated by the $n$-cycle $c=(1,2,\ldots,n)$. Recall that $S_n$, and hence $H$, acts on the graded ring of $n$-variable polynomials $\c[x]=\c[x_1,\dots, x_n]$ by permuting indices. (Note that, unlike in the previous section this polynomial ring has $n$ variables, agreeing with the index of $S_n$.) Denote the fixed space of this $S_n$-action by let $\c[x]^{S_n}$, and similarly for $\c[x]^H$. 
Then \cite[Theorem 8.2]{ReiStaWhi2004} implies that the triple $(X,X(q),C)$ exhibits the cyclic sieving phenomenon, where
\begin{equation}
\label{coset-polynomial}
X(q)= \frac{ \mathcal H(\c[x]^H ,q) }{ \mathcal H(\c[x]^{S_n},q) }.
\end{equation}

\begin{remark}
Somewhat different notation is used in \cite{ReiStaWhi2004}: their $X(q)$ is defined as $\mathcal H(A(S_n)^{H},q)$, where $A(S_n)$ is the coinvariant algebra $\c[x]/\<f\in \c[x]^{S_n}: f(0)=0\>$. The statement that $\mathcal H(A(S_n)^H;q)$ is the same as $\mathcal H(\c[x]^H;q)/\mathcal H(\c[x]^{S_n}; q)$ is a standard fact from invariant theory; see for instance \cite[Corollary 1.2.2]{BroReiSmiWeb2011}.
\end{remark}

We write $\Conj_G(\gamma)$ to denote the set of elements in a group $G$ conjugate to $\gamma$. For $G=S_n$ we can describe these concretely. Recall that by counting the lengths of cycles in any $\gamma\in S_n$, and placing them in non-decreasing order, we obtain a partition $\mu$; we say that $\mu=\cyc(\gamma)$ is the \textbf{cycle type} of $\gamma$. Then elements are conjugate in $S_n$ if and only if they have the same cycle type. Therefore, as an abuse of notation, we may write $\Conj_{S_n}(\mu)$ instead of $\Conj_{S_n}(\gamma)$.

Generally, we say that $H$ \textbf{avoids} $\mu$ if $H\cap\Conj_{S_n}(\mu)=\varnothing$. We will be interested in $X=S_n/H$ as a set on which $C$ acts by left-multiplication, particularly for those $H$ such that the action is free. Note that the freeness of this action is equivalent to the condition that no nontrivial power of $c$ is $S_n$-conjugate to an element of $H$, and hence to the statement that $H$ avoids $(d^{\frac{n}{d}})$ for any divisor $d>1$ of $n$. In this case, we aim to set up an additional cyclic sieving triple. We begin with the polynomial:

\begin{prop}
\label{freeactpoly}
Let $C$ be a cyclic group acting freely on a set $X$, and $X(q)\in \z[q]$. Then $(X,X(q),C)$ exhibits the cyclic sieving phenomenon if and only if $Y(q) = \frac{1}{[n]_q}X(q)$ is a polynomial in $\z[q]$.
\end{prop}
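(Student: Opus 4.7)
My plan is to recast the cyclic sieving condition as a statement about where the roots of $X(q)$ sit, and then observe that this root condition is equivalent to $Y(q)\in\z[q]$. Write $C=\langle c\rangle$ with $|C|=n$, and let $\zeta=e^{2\pi i/n}$. Because the action is free, $\Fix(c^b)$ is empty whenever $c^b\neq e$, so the CSP condition collapses to the two requirements $X(1)=|X|$ and $X(\zeta^b)=0$ for every $b\not\equiv 0\pmod n$. The first is the standard ``total count'' prerequisite shared by every CSP triple; I would treat it as an implicit property of the polynomial $X(q)$ being tested and focus on the second.

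Next I would identify the root condition with the divisibility $[n]_q\mid X(q)$. Indeed,
\[
[n]_q \;=\; \frac{q^n-1}{q-1} \;=\; \prod_{\substack{d\mid n\\ d>1}} \Phi_d(q)
\]
is the monic polynomial whose roots are precisely the nontrivial $n$-th roots of unity, each with multiplicity one. So $X(\zeta^b)=0$ for every nontrivial $\zeta^b$ is equivalent to $[n]_q\mid X(q)$ in $\c[q]$.

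The final step is to upgrade this divisibility from $\c[q]$ to $\z[q]$. Since $[n]_q$ is monic and lies in $\z[q]$, polynomial long division of $X(q)\in\z[q]$ by $[n]_q$ produces a quotient and remainder already inside $\z[q]$. Thus $[n]_q\mid X(q)$ in $\c[q]$ if and only if the remainder vanishes, if and only if $Y(q)=X(q)/[n]_q$ lies in $\z[q]$. For the reverse direction one then checks that $X(1)=[n]_1\cdot Y(1)=n\cdot Y(1)$, which recovers $|X|=n\cdot|X/C|$ by freeness, confirming CSP at $b\equiv 0$ as well.

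The only real subtlety is the behavior at $q=1$: both $X(1)$ and $\Fix(e)=X$ count the whole set, but via a different mechanism than the vanishing at the other $n$-th roots of unity. I would handle it separately from the cyclotomic divisibility step, letting freeness of the action supply the factor of $n$ that makes $Y(1)=|X/C|$. Everything else is a clean equivalence between evaluation at roots of unity and cyclotomic divisibility, made integral by the monicity of $[n]_q$.
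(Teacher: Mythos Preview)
Your proposal is correct and follows essentially the same approach as the paper: both reduce the CSP condition under a free action to the vanishing of $X(q)$ at all nontrivial $n$-th roots of unity, and identify this with divisibility by $[n]_q=\prod_{i=1}^{n-1}(q-\zeta^i)$. You are simply more explicit than the paper about upgrading divisibility from $\c[q]$ to $\z[q]$ via monicity and about the $q=1$ case, both of which the paper leaves implicit.
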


\begin{proof}
Let $\zeta$ be a primitive $n^\text{th}$ root of unity. Then both conditions are equivalent to the fact that that $X(\zeta^i)=0$ for any $1\leq i\leq n-1$, because $[n]_q=\frac{1-q^n}{1-q}=\prod_{i=1}^{n-1}(q-\zeta^i)$.
\end{proof}

Moreover, notice that elements $\tau$ of the normalizer $N_{S_n}(C)$ can act on $Y=C\backslash S_n/H$, the collection of double-cosets $CgH$, via the rule
\begin{equation}
\label{normalizer-element-action}
\tau \cdot CgH =\tau CgH = C\tau g H.
\end{equation}

\begin{exA}
\label{necklaceinvolution}
For instance, let $\tau_0$ be the permutation that fixes $n$ and otherwise sends $i$ to $n-i$, for any $1\leq i\leq n$. Note that $\tau_0$ also fixes exactly one other vertex, namely $\frac{n}{2}+1$, when $n$ is even. In that case that $H=S_{\alpha_1}\times\cdots\times S_{\alpha_r}$ this has a natural geometric interpretation on words, which descends to (the unique) reflection on necklaces.

\begin{center}
\begin{tikzpicture}[scale=1]
\pgfmathsetlengthmacro{\dot}{5.5pt};
\pgfmathsetlengthmacro{\layerheight}{48pt};
\pgfmathsetlengthmacro{\heightoffset}{\layerheight / 2};

\pgfmathsetmacro{\b}{3};
\pgfmathsetmacro{\a}{4};
\pgfmathsetlengthmacro{\sqsize}{8pt};

\pgfmathsetlengthmacro{\neckR}{\dot*4.5};
\pgfmathsetmacro{\k}{0};
\pgfmathsetmacro{\beads}{\a+\b-\k};
\pgfmathsetmacro{\beadSep}{360/\beads};
\pgfmathsetmacro{\beadOff}{90};
\pgfmathsetlengthmacro{\beadR}{\dot};

\coordinate (top1) at (0,\heightoffset+3*\layerheight);
\coordinate (top2) at (0,\heightoffset+2*\layerheight);
\coordinate (top3) at (0,\heightoffset+1*\layerheight);
\coordinate (TL) at (0pt,\heightoffset);
\coordinate (TR) at (80pt,\heightoffset);
\coordinate (BL) at (0pt,-\heightoffset);
\coordinate (BR) at (80pt,-\heightoffset);
\coordinate (bot3) at (0,-\heightoffset-1*\layerheight);
\coordinate (bot2) at (0,-\heightoffset-2*\layerheight);
\coordinate (bot1) at (0,-\heightoffset-3*\layerheight);

\begin{scope}[shift={(70pt,-\heightoffset)}]
	\coordinate (center) at (0, 0);
	\draw [black,] (center) circle [radius=\neckR];
	\draw [black, fill=black] (center)+(\beadOff-0*\beadSep:\neckR) circle [radius=\beadR*0.8];
	\draw [black, fill=black] (center)+(\beadOff-1*\beadSep:\neckR) circle [radius=\beadR*0.8];
	\draw [black, fill=black] (center)+(\beadOff-5*\beadSep:\neckR) circle [radius=\beadR*0.8];
	\draw [black, fill=white, thin] (center)+(\beadOff-2*\beadSep:\neckR) circle [radius=\beadR];
	\draw [black, fill=white, thin] (center)+(\beadOff-3*\beadSep:\neckR) circle [radius=\beadR];
	\draw [black, fill=white, thin] (center)+(\beadOff-4*\beadSep:\neckR) circle [radius=\beadR];
	\draw [black, fill=white, thin] (center)+(\beadOff-6*\beadSep:\neckR) circle [radius=\beadR];
\end{scope}

\node at (0,-\heightoffset) {\color{black} \Huge $\mapsto$};

\begin{scope}[shift={(-70pt,-\heightoffset)}]
	\coordinate (center) at (0, 0);
	\draw [black,] (center) circle [radius=\neckR];
	\draw [dashed, gray, very thick] (center)+(0,1.4*\neckR) -- (center);
	\draw [dashed, gray, very thick] (center)+(0,-1.4*\neckR) -- (center);
	\draw [black, fill=black] (center)+(\beadOff+0*\beadSep:\neckR) circle [radius=\beadR*0.8];
	\draw [black, fill=black] (center)+(\beadOff+1*\beadSep:\neckR) circle [radius=\beadR*0.8];
	\draw [black, fill=black] (center)+(\beadOff+5*\beadSep:\neckR) circle [radius=\beadR*0.8];
	\draw [black, fill=white, thin] (center)+(\beadOff+2*\beadSep:\neckR) circle [radius=\beadR];
	\draw [black, fill=white, thin] (center)+(\beadOff+3*\beadSep:\neckR) circle [radius=\beadR];
	\draw [black, fill=white, thin] (center)+(\beadOff+4*\beadSep:\neckR) circle [radius=\beadR];
	\draw [black, fill=white, thin] (center)+(\beadOff+6*\beadSep:\neckR) circle [radius=\beadR];
\end{scope}

\end{tikzpicture}
\end{center}

\begin{center}
\begin{tikzpicture}[scale=1]
\pgfmathsetlengthmacro{\dot}{5.5pt};
\pgfmathsetlengthmacro{\layerheight}{48pt};
\pgfmathsetlengthmacro{\heightoffset}{\layerheight / 2};

\pgfmathsetmacro{\b}{3};
\pgfmathsetmacro{\a}{5};
\pgfmathsetlengthmacro{\sqsize}{8pt};

\pgfmathsetlengthmacro{\neckR}{\dot*4.5};
\pgfmathsetmacro{\k}{0};
\pgfmathsetmacro{\beads}{\a+\b-\k};
\pgfmathsetmacro{\beadSep}{360/\beads};
\pgfmathsetmacro{\beadOff}{90};
\pgfmathsetlengthmacro{\beadR}{\dot};

\coordinate (top1) at (0,\heightoffset+3*\layerheight);
\coordinate (top2) at (0,\heightoffset+2*\layerheight);
\coordinate (top3) at (0,\heightoffset+1*\layerheight);
\coordinate (TL) at (0pt,\heightoffset);
\coordinate (TR) at (80pt,\heightoffset);
\coordinate (BL) at (0pt,-\heightoffset);
\coordinate (BR) at (80pt,-\heightoffset);
\coordinate (bot3) at (0,-\heightoffset-1*\layerheight);
\coordinate (bot2) at (0,-\heightoffset-2*\layerheight);
\coordinate (bot1) at (0,-\heightoffset-3*\layerheight);

\begin{scope}[shift={(70pt,-\heightoffset)}]
	\coordinate (center) at (0, 0);
	\draw [black,] (center) circle [radius=\neckR];
	\draw [black, fill=black] (center)+(\beadOff-0*\beadSep:\neckR) circle [radius=\beadR*0.8];
	\draw [black, fill=black] (center)+(\beadOff-1*\beadSep:\neckR) circle [radius=\beadR*0.8];
	\draw [black, fill=black] (center)+(\beadOff-5*\beadSep:\neckR) circle [radius=\beadR*0.8];
	\draw [black, fill=white, thin] (center)+(\beadOff-2*\beadSep:\neckR) circle [radius=\beadR];
	\draw [black, fill=white, thin] (center)+(\beadOff-3*\beadSep:\neckR) circle [radius=\beadR];
	\draw [black, fill=white, thin] (center)+(\beadOff-4*\beadSep:\neckR) circle [radius=\beadR];
	\draw [black, fill=white, thin] (center)+(\beadOff-6*\beadSep:\neckR) circle [radius=\beadR];
	\draw [black, fill=white, thin] (center)+(\beadOff-7*\beadSep:\neckR) circle [radius=\beadR];
\end{scope}

\node at (0,-\heightoffset) {\color{black} \Huge $\mapsto$};

\begin{scope}[shift={(-70pt,-\heightoffset)}]
	\coordinate (center) at (0, 0);
	\draw [black,] (center) circle [radius=\neckR];
	\draw [dashed, gray, very thick] (center)+(0,1.4*\neckR) -- (center);
	\draw [dashed, gray, very thick] (center)+(0,-1.4*\neckR) -- (center);
	\draw [black, fill=black] (center)+(\beadOff+0*\beadSep:\neckR) circle [radius=\beadR*0.8];
	\draw [black, fill=black] (center)+(\beadOff+1*\beadSep:\neckR) circle [radius=\beadR*0.8];
	\draw [black, fill=black] (center)+(\beadOff+5*\beadSep:\neckR) circle [radius=\beadR*0.8];
	\draw [black, fill=white, thin] (center)+(\beadOff+2*\beadSep:\neckR) circle [radius=\beadR];
	\draw [black, fill=white, thin] (center)+(\beadOff+3*\beadSep:\neckR) circle [radius=\beadR];
	\draw [black, fill=white, thin] (center)+(\beadOff+4*\beadSep:\neckR) circle [radius=\beadR];
	\draw [black, fill=white, thin] (center)+(\beadOff+6*\beadSep:\neckR) circle [radius=\beadR];
	\draw [black, fill=white, thin] (center)+(\beadOff+7*\beadSep:\neckR) circle [radius=\beadR];
\end{scope}

\end{tikzpicture}
\end{center}
\end{exA}

We now write a generalization of Theorem \ref{braceletcount} that allows some flexibility with both $\tau$ and $H$:

\begin{thm}
\label{technicalcsp}
Fix an element $\tau\in N_{S_n}(C)$ whose cycle type is either $(m^{\frac{n-1}{m}},1)$ or $(m^{\frac{n-2}{m}},1,1)$,
for some integer $m$. Suppose that $H\leq S_n$ is a subgroup that avoids the following cycle types:
\begin{itemize}
\item $(\ell^\frac{n}{\ell})$ for any divisor $\ell>1$ of $n$
\item $(4, 2^{\frac{n-4}{2}})$ if $m$ is even
\item $\big( \ell^{\frac{n-2}{\ell}}, 2 \big)$ for any divisor $\ell>1$ of $m$
\item $\big( (2\ell)^{\frac{n-2}{2\ell}}, 2 \big)$ for any divisor $\ell>1$ of $m$, if $m$ is odd
\end{itemize} 
Finally, let $\<\tau\>$ act on $Y=C\backslash S_n/H$ via the rule (\ref{normalizer-element-action}) and $Y(q)=\frac{1}{[n]_q}X(q)$, where X(q) is defined by (\ref{coset-polynomial}). Then $(Y, Y(q), \<\tau\>)$ exhibits the cyclic sieving phenomenon.
\end{thm}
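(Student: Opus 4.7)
The plan is to reduce the desired CSP to fixed-point counts on $X = S_n/H$, evaluate those counts via Springer-type identities together with the cycle-type avoidance hypotheses on $H$, and then reassemble. First, observe that condition (i) is precisely what makes $C$ act freely on $X$, so by Proposition \ref{freeactpoly} the ratio $Y(q) = X(q)/[n]_q$ is a polynomial. Double-counting pairs $(c',y)$ with $c'\in C\tau^b$ and $c'y=y$ — where free-ness of $C$ forces each $y$ to be stabilized by at most one $c'\in C\tau^b$ — then yields
$$ |\Fix_{\bar\tau^b}(Y)| \;=\; \frac{1}{|C|}\sum_{c' \in C\tau^b} |\Fix_{c'}(X)|. $$

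Next, classify the elements of $C\tau^b$. Using the standard isomorphism $N_{S_n}(C) \cong \text{Aff}(\z/n)$, I would write $\tau\leftrightarrow x\mapsto\alpha x+\beta$ with $\alpha\in(\z/n)^\times$ of order $m$, so that each $c^j\tau^b$ corresponds to the affine map $x\mapsto\alpha^b x+\gamma$ where $\gamma$ traverses $\z/n$ as $j$ varies. Its cycle type is then governed by $g := \gcd(\alpha^b-1,n)$ together with whether $g \mid \gamma$. Enumerating the sub-cases produces, on one hand, cycle types matching the four entries of the avoidance list and, on the other, ``regular-type'' cycle shapes $(d^{(n-s)/d}, 1^s)$ with $d = m/\gcd(b,m)$ and $s \in \{1,2\}$. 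For the former, $|\Fix_{c'}(X)|=0$ since no conjugate of $c'$ lies in $H$; for the latter, I would invoke (the proof of) Reiner--Stanton--White's Theorem~8.2 from \cite{ReiStaWhi2004} — interpreted via Springer's theory of regular elements, and after restricting to the parabolic $S_{n-s}\leq S_n$ stabilizing the fixed points of $c'$ in the $s=2$ case — to conclude $|\Fix_{c'}(X)| = X(\zeta_m^b)$.

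Finally, counting the regular-type contributors and checking that their number equals $|C|/[n]_{\zeta_m^b}$ in every sub-case (dictated by the cycle type of $\tau$, the parity of $m$, and the divisibility of $b$ by divisors of $m$) combines with the previous steps to give
$$ |\Fix_{\bar\tau^b}(Y)| \;=\; \frac{X(\zeta_m^b)}{[n]_{\zeta_m^b}} \;=\; Y(\zeta_m^b), $$
which is exactly the CSP identity. The delicate part is the second case, where $\tau$ has cycle type $(m^{(n-2)/m},1,1)$: the regular-type $c'$ then carry two fixed points and so fail to be Springer-regular in $S_n$ itself on its $(n-1)$-dimensional reflection representation in the strict sense. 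Showing that the RSW identity still delivers $|\Fix_{c'}(X)| = X(\zeta_m^b)$ for such $c'$ — by passing to the parabolic $S_{n-2}$ where $c'$ does become regular, or by a refined Molien-formula calculation — is where the bulk of the case-work lives, and is reflected in the need for the three sub-case-specific avoidance conditions (ii)--(iv) alongside the basic freeness condition (i).
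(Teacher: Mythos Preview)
Your double-counting identity $|\Fix_{\bar\tau^b}(Y)| = \frac{1}{|C|}\sum_{c'\in C\tau^b}|\Fix_{c'}(X)|$ is correct, and together with the cycle-type classification of the coset $C\tau^b$ it amounts to the paper's Proposition~\ref{surjectiveprop} combined with Lemma~\ref{badcycles}. The gap is in the pair of claims that (a) $|\Fix_{c'}(X)| = X(\zeta_m^b)$ for each regular-type $c'$, and (b) the number of such $c'$ equals $|C|/[n]_{\zeta_m^b}$.

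In the $s=1$ case both hold: $c'$ of cycle type $(d^{(n-1)/d},1)$ is genuinely Springer $\zeta$-regular in $S_n$, and since $n\equiv 1\bmod d$ one has $[n]_{\zeta_m^b}=1$, so (b) asserts there are $n$ such elements, which is right. But in the $s=2$ case both claims are false. An element of cycle type $(d^{(n-2)/d},1,1)$ is \emph{not} Springer-regular in $S_n$, and $X(\zeta_m^b)=[n]_{\zeta_m^b}\,Y(\zeta_m^b)=(1+\zeta_m^b)\,Y(\zeta_m^b)$ is not even real for $d>2$, so it cannot equal the integer $|\Fix_{c'}(X)|$. Likewise the number of regular-type $c'$ is $n/2$ (the even shifts, per Lemma~\ref{badcycles}), not $n/(1+\zeta_m^b)$. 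The two errors do cancel \emph{formally}---their product is $X(\zeta_m^b)/[n]_{\zeta_m^b}=Y(\zeta_m^b)$ by definition---but that is not a proof, since neither intermediate quantity is what you claim. Passing to the parabolic $S_{n-2}$ does not help: $X=S_n/H$ carries no natural $S_{n-2}$-action, and $X(q)$ is a ratio of Hilbert series for $S_n$-invariants, so Springer theory for $S_{n-2}$ says nothing about it.

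The paper sidesteps this by never asserting $|\Fix_{c'}(X)|=X(\zeta)$. It instead evaluates $Y(\zeta)$ directly from Molien's formula (Proposition~\ref{moliencalc}), obtaining a sum over $\lambda\vdash n-km$ of terms $|\Fix_X(m^k,\lambda)|$ with explicit complex coefficients. In the $s=2$ case the avoidance hypotheses kill all contributions except $\lambda=(1,1)$, leaving $Y(\zeta)=\tfrac12|\Fix_X(\tau)|$, which then matches your (correct) averaging computation $|\Fix_Y(\tau)|=\tfrac12|\Fix_X(\tau)|$. If you want to salvage your outline, the fix is to abandon (a) and (b) and instead plug Lemma~\ref{fixpts} into your sum and compare it to the Molien expansion of $Y(\zeta)$ directly---but at that point you are doing the paper's argument in a different order.
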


The technicalities here are unfortunate, but the restrictions on $H$, at least, capture genuine difficulties. For instance, the desired sieving fails for $H=\<(1234)(5678)(90)\>\leq S_{10}$ and $\tau=(1)(2408)(3795)(6)$, even though $\tau\in N_{S_{10}}(C)$ has cycle type $(4^{\frac{10-2}{2}},1,1)$.

On the other hand, it may be possible to allow a broader class of $\tau$ if we appropriately restrict $H$, but the restrictions on $\tau$ given here are needed for our argument. The precise role they play is explicated at the end of Section \ref{yzetacalc}, where in particular it is clear that these are the only cycle types that can reasonably be expected to yield such a cyclic sieving result whenever $n\equiv 1,2\bmod m$. However, when $n\not\equiv 1,2\bmod m$ the situation appears much more delicate, and we do not have a general conjecture.

Nevertheless, this theorem is already permissive enough to resolve Theorem \ref{braceletcount}, as follows.

\braceletcount*

\begin{proof}[Proof (of Theorem \ref{braceletcount})] 
It is a standard fact of invariant theory that if $H=S_{\a_1}\times\cdots\times S_{\a_r}$, then
$$ \mathcal H(\c[x]^{S_n},q) = \prod_{i=1}^n \frac{1}{1-q^i}
\qquad\text{and}\qquad
\mathcal H(\c[x]^{H},q) = \prod_{j=1}^r \prod_{i=1}^{\a_j} \frac{1}{1-q^i}. $$
From this we deduce that
$$ X(q)  =  \frac{ \mathcal H(\c[x]^H ,q) }{ \mathcal H(\c[x]^{S_n},q) }  =  \pvec{n}{\a}_q.$$

Notice that as $S_n$-sets, $X=S_n/H$ is equivalent to the set of words having exactly $\a_i$ occurences of the letter $i$, and so $C$ acts freely on $X$ if and only if $\gcd(\a_1,\dots, \a_r)=1$. In this case, the associated $Y(q)$ is $C(\a;q)$. In Example A, we saw that $\tau_0$ acts by on $Y$ by reflection, and that its cycle type is $(2^{\frac{n-1}{2}},1)$ for odd $n$ and $(2^{\frac{n-2}{2}},1,1)$ for even $n$.

Moreover, for this choice of $\tau$ (for which $m=2$), we observe that:
\begin{itemize}
\item As discussed above, the fact that $C$ acts freely on $X$ is equivalent to $H$ avoiding $(\ell^\frac{n}{\ell})$ for any divisor $\ell>1$ of $n$.
\item $H$ cannot contain elements with cycle type $(2^{\frac{n-4}{2}},4)$, because otherwise every $\a_i$ would have to be even, but $\gcd(\a)=1$.
\item The only divisor of $2$ aside from $1$ is $\ell=2$ itself, for which $\big( \ell^{\frac{n-2}{\ell}}, 2 \big) = (2^{\frac{n}{2}})$. Again $H$ avoids this cycle type by the freeness of $C$ on $X$.
\end{itemize}

Therefore, $\tau_0$ and $H$ satisfy the conditions of Theorem \ref{technicalcsp}, and thus we conclude that the triple $(Y,Y(q), \tau_0)$ exhibits Stembridge's $q=-1$ phenomenon, as desired.
\end{proof}

Before beginning the proof of Theorem \ref{technicalcsp}, we wish to make two more remarks.

First, it is clear that the latter three cycle conditions apply only when $n\equiv 2 \bmod m$. It is tempting to think that the only problem with extending to $n\equiv 3\bmod m$ is an unwieldy proliferation of cycle type restrictions. This may indeed be the case, but we reiterate that our argument breaks in a more substantive way.

Second, in Section \ref{fixycalc} we recall some facts from elementary number theory that provide some insight into which $\tau\in S_n$ have the cycle types required by Theorem \ref{technicalcsp}. In particular, this reveals a fairly general setting in which all of the technicalities simplify. When $n$ is an odd prime, it happens that every $\tau\in N_{S_n}(C)$ is either in $C$ itself, or has cycle type $(m^{\frac{n-1}{m}},1)$ for some $m$. Moreover, as described above, we only need the first cycle type restriction. Therefore, we obtain the following pleasing corollary:

\begin{cor}
\label{primecsp}
Let $p$ be an odd prime. Fix an element $\tau\in N_{S_p}(C)\minus C$, and a subgroup $H\leq S_p$ for which $C$ acts freely on $S_p/H$. Additionally, let $\<\tau\>$ act on $Y=C\backslash S_p/H$ via the rule (\ref{normalizer-element-action}) and let $Y(q)=\frac{1}{[n]_q}X(q)$, where $X(q)$ is defined by (\ref{coset-polynomial}). Then the triple $(Y, Y(q), \<\tau\>)$ exhibits the cyclic sieving phenomenon.
\end{cor}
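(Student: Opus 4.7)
The plan is to deduce the corollary directly from Theorem \ref{technicalcsp} by showing that each of its hypotheses is automatic when $n = p$ is an odd prime. First I would record the standard fact (cf.\ Section \ref{fixycalc}) that $N_{S_p}(C)$ is the group of affine permutations $x \mapsto ax + b$ of $\z/p\z$ with $a\in (\z/p\z)^\times$ and $b \in \z/p\z$. Such a permutation lies in $C$ exactly when $a = 1$, so the hypothesis $\tau \in N_{S_p}(C) \minus C$ forces $a \neq 1$. In that case $\tau$ has the unique fixed point $b/(1-a) \bmod p$, while all of its remaining orbits have the common length $m := \ord(a)$ in $(\z/p\z)^\times$. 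Therefore $\tau$ has cycle type $(m^{(p-1)/m}, 1)$, which matches the first of the two cycle-type shapes allowed by Theorem \ref{technicalcsp}.

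Next I would verify the four avoidance conditions on $H$. For the first, the only divisor of $p$ greater than $1$ is $p$ itself, so the condition reduces to ``$H$ contains no $p$-cycle''; since all $p$-cycles in $S_p$ are conjugate and every nontrivial power of $c$ is a $p$-cycle, this is precisely the stated hypothesis that $C$ acts freely on $S_p/H$. The second condition refers to the cycle type $(4, 2^{(p-4)/2})$, whose exponent $(p-4)/2$ is not a nonnegative integer when $p$ is odd, so this cycle type does not occur in $S_p$ and is avoided vacuously. For the third, a cycle type $(\ell^{(p-2)/\ell},2)$ with $\ell \mid m$ and $\ell > 1$ would require $\ell \mid p-2$, but $\ell \mid m \mid p-1$ together with $\gcd(p-1, p-2) = 1$ forces $\ell = 1$, a contradiction. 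The fourth condition collapses by the same observation with $\ell$ replaced by $2\ell$, since $2\ell \mid p-2$ still implies $\ell \mid p-2$.

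With every hypothesis of Theorem \ref{technicalcsp} now verified, the desired cyclic sieving for $(Y, Y(q), \<\tau\>)$ follows immediately. I anticipate no real obstacle: the only genuine arithmetic input is the elementary fact that $\gcd(p-1, p-2) = 1$, and this single observation is what collapses the last two bullets of Theorem \ref{technicalcsp} and explains why the statement simplifies so cleanly in the prime case.
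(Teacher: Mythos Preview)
Your proposal is correct and follows essentially the same approach as the paper: verify that when $n=p$ is an odd prime, every $\tau\in N_{S_p}(C)\setminus C$ automatically has cycle type $(m^{(p-1)/m},1)$, and that the last three avoidance conditions of Theorem~\ref{technicalcsp} are vacuous. The paper phrases the latter point slightly differently---observing that those three conditions only arise in the $n\equiv 2\bmod m$ case, whereas here $p\equiv 1\bmod m$---but your explicit use of $\gcd(p-1,p-2)=1$ is the same arithmetic in unpacked form.
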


\section{Proof of Theorem \ref{technicalcsp}}

We begin by fixing some notation for the remainder of the section. 
For any group $G$ acting on some set $A$, and any $g\in G$, write $\Fix_A(g)$ to denote the set of $g$-fixpoints: $\{a\in A: g\cdot a=a\}$. Any two $G$-conjugate elements have the same number of fixpoints in $A$. So, in particular, for a partition $\mu$ of $n$, we abuse notation and write $\left|\Fix_{A}(\mu)\right|$ to mean the number of points in $A$ that are fixed by any permutation with cycle type $\mu$.

In the following two subsections we will complete the bulk of a single root-of-unity calculation, and then we will bundle them together with some concluding details. For the intermediate results, the following definition is useful:

\begin{defn}
Suppose that $\tau'\in N_{S_n}(C)$ has cycle type either $\big(m^{\frac{n-1}{m}},1\big)$ or $\big(m^{\frac{n-2}{m}},1,1\big)$,
for some integer $m$. Write $k=\left\lfloor\frac{n-1}{m}\right\rfloor$ for the number of $m$-cycles that $\tau'$ has. Moreover, suppose that $H\leq S_n$ is a subgroup such that $H$ avoids the following cycle types:
\begin{itemize}
\item $(\ell^\frac{n}{\ell})$ for any divisor $\ell>1$ of $n$
\item $(m^{k}, 2)$, and
\item $( (2m)^{\frac{k}{2}},2 )$ if $m$ is odd.
\end{itemize} 
In this case, we say that the pair $(\tau',H)$ is \textbf{\goodpair}.
\end{defn}

Part of the bundling process is the observation that the conditions of Theorem \ref{technicalcsp} on $\tau$ and $H$ are equivalent to the statement that $(\tau^b,H)$ is \goodpair\!\, for every integer $1\leq b\leq m-1$, and also $H$ avoids $(4,2^{\frac{n-4}{2}})$. We will see that the latter cycle restriction arises from a different consideration than $C$-admissibilitydoes.

\subsection{Evaluating $Y(\zeta)$}
\label{yzetacalc}

The following lemma gives an explicit connection between $Y(\zeta)$ and various fixpoints in $X$. Notice that we only use the freeness condition on $H$, and not the other cycle type restrictions.

\begin{prop}
\label{moliencalc}
Fix a subgroup $H\leq S_n$ such that $C$ acts freely on $X=S_n/H$. Fix an integer $m\geq 2$, and then define $\zeta$ to be a primitive $m^\text{th}$ root of unity, and $k=\left\lfloor\frac{n-1}{m}\right\rfloor$. Moreover, for any partition $\l$ of $n-km$, write $c_i$ to denote the number of parts in $\l$ with size $i$. Then, defining
$$ X(q) = \frac{ \mathcal H(\c[x]^H ,q) }{ \mathcal H(\c[x]^{S_n},q) } $$
and $Y(q)=\frac{1}{[n]_q}X(q)$, we have the following:

\begin{enumerate}[(a)]
\item If $n\not\equiv 0 \text{ mod } m$, then 
$$ Y(\zeta) = (1-\zeta)\ds\prod_{i=1}^{n-1-km} \!\! (1-\zeta^i) \left[ \sum_{\l\,\vdash\, n-km}\frac{\left|\Fix_X(m^k,\l)\right|}{~\ds\prod_{i\geq 1} (i(1-\zeta^i))^{c_i}c_i!~}\right] .$$

\item If $n\equiv 0 \text{ mod } m$, then 
$$ Y(\zeta) = (1-\zeta) \left[ \frac{mk}{4}\left|\Fix_X(2m, m^{k-1})\right| ~+~ \sum_{\substack{\l\,\vdash\, m \\ \l_1\neq m}}\frac{\left|\Fix_X(m^k,\l)\right|}{~\ds\prod_{i\geq 1} (i(1-\zeta^i))^{c_i}c_i!~}  \right].$$
\end{enumerate}
\end{prop}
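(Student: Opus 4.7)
The plan is to apply Molien's formula to compute $\mathcal H(\c[x]^H; q)$ as $\frac{1}{|H|}\sum_{h \in H} \prod_j (1-q^{\ell_j(h)})^{-1}$, group by cycle type, and convert the weight into a fixpoint count via the standard orbit-counting identity $|H \cap \Conj_{S_n}(\mu)|/|H| = |\Fix_X(\mu)|/|C_{S_n}(\mu)|$. Dividing by $\mathcal H(\c[x]^{S_n}; q) = \prod_{i=1}^n (1-q^i)^{-1}$ then yields
\[
X(q) = \sum_\mu \frac{|\Fix_X(\mu)|}{|C_{S_n}(\mu)|} \cdot \frac{\prod_{i=1}^n(1-q^i)}{\prod_j(1-q^{\mu_j})},
\]
and cancelling the $(1-q^n)$ factor between $[n]_q$ and the numerator shows that each term of $Y(q) = (1-q)X(q)/(1-q^n)$ has numerator $(1-q)\prod_{i=1}^{n-1}(1-q^i)$ and denominator $|C_{S_n}(\mu)|\prod_j(1-q^{\mu_j})$, weighted by $|\Fix_X(\mu)|$.

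Next, specialize $q = \zeta$ and track orders of vanishing. The numerator has a simple zero at each $i \in \{m, 2m, \ldots, km\}$, contributing $k$ zeros total; the denominator picks up one zero per $m$-divisible part of $\mu$. Since $|\mu| = n$, the number of such parts is at most $\lfloor n/m \rfloor$, which equals $k$ in case (a) and $k+1$ in case (b). In case (a), a term survives only when $\mu$ has exactly $k$ parts divisible by $m$, and by the size constraint they must all equal $m$; so $\mu = (m^k, \lambda)$ with $\lambda \vdash n-km$, automatically containing no $m$-divisible parts. The limit is evaluated using two identities: $(1-q^{jm})/(1-q^m) \to j$ at $q = \zeta$, pairing and cancelling the zeros into a factor of $k!$; and the cyclotomic identity $\prod_{r=1}^{m-1}(1-\zeta^r) = m$, collecting the remaining non-$m$-divisible numerator factors by residue class mod $m$. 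Extracting $(1-\zeta^n)$ from $\prod_{r=1}^{n-km}(1-\zeta^r)$ and cancelling against the explicit $(1-\zeta^n)^{-1}$ in $Y$ leaves the truncated product $\prod_{r=1}^{n-1-km}(1-\zeta^r)$ of formula (a).

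Case (b) needs more care: here $[n]_\zeta = 0$ so $Y(\zeta)$ is a true limit, and the term $\mu = (m^{k+1})$ would a priori dominate. The freeness hypothesis saves us, since $c^{n/m}$ has cycle type exactly $(m^{k+1})$ and acts without fixed points, forcing $|\Fix_X(m^{k+1})| = 0$; this term is identically zero. The surviving contributions come from cycle types with exactly $k$ parts divisible by $m$, and a short case analysis on these parts (whose sizes are each at least $m$ and sum to at most $n = (k+1)m$) pins them down to $\mu = (m^k, \lambda)$ for $\lambda \vdash m$ with $\lambda_1 \neq m$, together with the anomalous shape $\mu = (2m, m^{k-1})$. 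The first family evaluates analogously to case (a); the second requires the extra limit $(1-q^m)/(1-q^{2m}) \to 1/(1+\zeta^m) = 1/2$ at $q = \zeta$, which combined with $|C_{S_n}(2m, m^{k-1})| = 2m^k(k-1)!$ produces the $mk/4$ coefficient.

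I expect the main obstacle to be organizing case (b) cleanly: rigorously justifying that the would-be pole from $\mu = (m^{k+1})$ is cancelled by the freeness-induced vanishing of $|\Fix_X(m^{k+1})|$, ruling out spurious cycle types such as $(3m, m^{k-2})$ via the part-size inequality, and performing the anomalous $(2m, m^{k-1})$ limit. Once these points are settled, the rest is bookkeeping with the cyclotomic identities above.
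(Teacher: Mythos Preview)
Your approach is correct and essentially identical to the paper's: both apply Molien's formula, reduce via the identity $|\Fix_X(\mu)| = |Z_{S_n}(\mu)|\,|\Conj_{S_n}(\mu)\cap H|/|H|$ (the paper records this separately as Lemma~\ref{fixpts}), and then evaluate at $q=\zeta$ by pairing the $k$ numerator zeros $\prod_{j=1}^{k}(1-q^{jm})$ against denominator factors and invoking $\prod_{r=1}^{m-1}(1-\zeta^{r})=m$, with the same case analysis on surviving cycle types and the same use of freeness to kill $(m^{k+1})$. The only cosmetic difference is ordering: you pass to fixpoint counts before specializing while the paper does so afterward, and your narrative briefly double-books the $(1-q^{n})$ cancellation, but neither affects the argument.
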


\begin{proof}
Observe that
\begin{align*}
X(q) &= \frac{\mathcal H(\c[x]^H;q)}{\mathcal H(\c[x]^{S_n};q)} \\
 &= \mathcal H(\c[x]^H;q)\prod_{i=1}^n (1-q^i) \\
 &= \mathcal H(\c[x]^H;q) \cdot (1-q)^n[n]!_q.
\end{align*}

Thus, $Y(q) = (1-q)^n [n-1]!_q \cdot \mathcal H(\c[x]^H;q)$. We can explicitly calculate the Hilbert series of the $H$-invariants using Molien's formula \cite{Mol1897}:
$$ \mathcal H(\c[x]^H;q) = \frac{1}{|H|}\sum_{h\in H} \frac{1}{1-\det(I-qh)} = \frac{1}{|H|}\sum_{h\in H}\prod_{\text{cycles } z \text{ of } h} \frac{1}{1-q^{|z|}}, $$
where $I$ is the identity map on $\c^n$, $h$ is the permutation matrix representing its action on (linear combinations of) the variables, and $|z|$ is the length of the cycle $z$.

Putting the Hilbert series aside momentarily, notice that
\begin{align*}
(1-q)^n[n-1]!_q & = (1-q) \prod_{i=1}^{n-1} (1-q^i) \\
 &= (1-q)\prod_{j=1}^k \left((1-q^{jm})\prod_{r=1}^{m-1} (1-q^{jm+r})\right)\prod_{i=km+1}^{n-1} (1-q^i).
\end{align*}
For each $h\in H$, define the auxiliary quantity
$$F_h(\zeta) := \lim_{q\to\zeta} ~ \prod_{j=1}^k (1-q^{jm}) \cdot \prod_{\text{cycles } z \text{ of } h} \frac{1}{1-q^{|z|}}, $$
so that 
$$Y(\zeta) = \frac{1}{|H|} (1-\zeta) \cdot \left(\prod_{r=1}^{m-1} (1-\zeta^{i'})\right)^k \cdot \prod_{i=km+1}^{n-1} (1-\zeta^i) \cdot  \sum_{h\in H} F_h(\zeta). $$

The second of the four factors in the above expression has a simple evaluation. Because $x=\zeta^{i'}$ is a root of $[m]_x$ for all integers $1\leq i'\leq m-1$, we conclude that $\prod_{i'=1}^m(x-\zeta^{i'})=[m]_x$, and hence 
$$Y(\zeta) = \frac{1}{|H|} (1-\zeta) \cdot m^k \cdot \prod_{i=km+1}^{n-1} (1-\zeta^i) \cdot  \sum_{h\in H} F_h(\zeta). $$

It remains to compute $F_h(\zeta)$. The first factor of $F_h(\zeta)$ has a zero of multiplicity $k$ at $q=\zeta$, and so $F_h(\zeta)=0$ unless the second factor has a pole of multiplicity at least $k$ at $q=\zeta$. We can see that this occurs precisely when $h$ has at least $k$ cycles whose lengths divide $m$. By definition of $k$, the element $h$ can never have more than $k+1$ cycles whose lengths divide $m$, because $(k+1)m\geq n$. In fact, $h$ has at most $k$ such cycles: equality occurs if and only if $m|n$, but then $\cyc(h)=(m^k)$, which contradicts that $(\tau, H)$ is \goodpair. This, in turn, means that either
\begin{itemize}
\item $m$ does not divide $n$, in which case $\cyc(h)=(m^k,\lambda)$ for some $\lambda\vdash n-km$, or
\item $m$ divides $n$ and $\cyc(h)=(m^k,\lambda)$ for some $\lambda\vdash m$ (since $n-km=m$),
\item $m$ divides $n$ and $\cyc(h)=(2m, m^{k-1})$; this is the only way that some cycle of $h$ has length greater than $m$, while still having at least $k$ cycles that divide $m$.
\end{itemize}

For $\lambda$ a partition with $\ell$ parts, define the further auxiliary quantity 
$$F_\lambda(\zeta) :=~ \prod_{j=1}^\ell \frac{1}{1-\zeta^{\lambda_j}} ~=~ \prod_{i\geq 1} \frac{1}{(1-\zeta^i)^{c_i}},$$
so that
$$F_h(\zeta) = \lim_{q\to\zeta} ~ \prod_{j=1}^k (1-q^{jm}) \cdot \prod_{j=2}^{k} \frac{1}{1-q^{m}} \cdot
\begin{cases}
\ds \frac{1}{1-q^{2m}} & \text{ if } \cyc(h)=(2m,m^{k-1}) \\~\\
\ds \frac{F_\lambda(\zeta)}{1-q^m} & \text{ otherwise}
\end{cases} $$
In both cases, the $k$ factors of $1-q^m$ in the denominator may be pulled into the first product, yielding the $q$-factorial $[k]!_{q}$ evaluated at $q=\zeta^m=1$. Hence, $F_h(\zeta)$ is either $\frac{k!}{1+\zeta^m}=\frac12k!$ for the exceptional cycle type, otherwise is $k!\,F_\lambda(\zeta)$.

Putting this all together
\begin{align*}
Y(\zeta) = \frac{(1-\zeta)m^k}{|H|} \prod_{i=km+1}^{n-1} (1-\zeta^i) \left( \frac{k!}{2}\right. &\Big|\{h\in H: \cyc(h)=(2m, m^{k-1})\}\Big| \\
& \left. +~ k!\,\sum_{\lambda} \Big|\{h\in H: \cyc(h)=(m^k,\lambda)\}\Big|F_\lambda(\zeta)\right) 
\end{align*}

For the moment, let us assume that $m$ does not divide $n$, so that we may shorten the formula, ignoring the first parenthesized term. Plugging in the definition of $F_\lambda(\zeta)$, we observe that
\begin{align*}
Y(\zeta) &= \frac{1-\zeta}{|H|} \prod_{i=km+1}^{n-1} (1-\zeta^i) \cdot \left(\sum_{\lambda} \,\left|\Conj_{S_n}(m^k,\lambda)\cap H\right|\, m^kk!\,\prod_{i\geq 1}\frac{1}{(1-\zeta^i)^{c_i}}\right) \\
 &= \frac{1-\zeta}{|H|} \prod_{i=km+1}^{n-1} (1-\zeta^i) \cdot \left(\sum_{\lambda} \,\left|\Conj_{S_n}(m^k,\lambda)\cap H\right| \left|Z_{S_n}(m^k,\lambda)\right|\,\prod_{i\geq 1}\frac{1}{((1-\zeta^i)i)^{c_i}c_i!}\right).
\end{align*}
In the last line, we have written $\left|Z_{S_n}(\mu)\right|$ to denote the size of the centralizer of any element in $S_n$ with cycle type $\mu$.

In the case when $m$ divides $n$, note that the initial product simplifies, since in that case it includes all roots of unity except $1$ itself, and thus as argued before evaluates to $m$. Then, performing similar calculations to the above yields:
\begin{align*}
Y(\zeta) = \frac{(1-\zeta)}{|H|} \left( \frac{mk}{4}\right. & \left|\Conj_{S_n}(2m,m^{k-1})\cap H\right| \left|Z_{S_n}(2m,m^{k-1})\right| \\
& \left. + \sum_{\lambda} \,\left|\Conj_{S_n}(m^k,\lambda)\cap H\right| \left|Z_{S_n}(m^k,\lambda)\right|\,\prod_{i\geq 1}\frac{1}{((1-\zeta^i)i)^{c_i}c_i!}\right)
\end{align*}

Comparing this to the desired formula, it would suffice to show that 
$$ \left|\Fix_X(\g)\right| = \frac{1}{|H|}\cdot \left|\text{Conj}_{S_n}(\gamma)\cap H\right| \left|Z_{S_n}(\g)\right| $$
for any $\gamma\in S_n$. In fact, the analogous statement is true for any group, not just $S_n$: see Lemma \ref{fixpts} below.
\end{proof}

In analogy to the symmetric group notation, for any group $G$ and any $g\in G$, write $Z_{G}(\gamma)$ to denote the centralizer of $\gamma$ in $G$.

\begin{lem}
\label{fixpts}
For any finite group $G$, any subgroup $H\leq G$, and any $\g\in G$:
$$ \left|\Fix_{G/H}(\g)\right| = \frac{1}{|H|} \cdot \left|Z_G(\g)\right|\left|\mathrm{Conj}_G(\g) \cap H\right|. $$
\end{lem}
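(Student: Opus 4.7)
The plan is to count the set $S=\{g\in G:g^{-1}\gamma g\in H\}$ in two different ways and compare. First I would establish the straightforward identity $|\Fix_{G/H}(\gamma)|=\tfrac{1}{|H|}|S|$. A coset $gH$ is fixed by $\gamma$ if and only if $\gamma gH=gH$, i.e.\ $g^{-1}\gamma g\in H$, so membership of $g$ in $S$ depends only on its coset $gH$; since each coset contributes $|H|$ representatives to $S$, dividing gives the formula.

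Next I would evaluate $|S|$ by partitioning it according to the value $h=g^{-1}\gamma g$. The possible values of $h$ are precisely the elements of $\mathrm{Conj}_G(\gamma)\cap H$, and for a fixed such $h$ the fiber $\{g\in G:g^{-1}\gamma g=h\}$ is either empty or a coset of $Z_G(\gamma)$ (if $g_0$ is one solution, the set of solutions is $g_0\cdot Z_G(\gamma)$, since $g^{-1}\gamma g=g_0^{-1}\gamma g_0$ iff $gg_0^{-1}$ centralizes $\gamma$). Because $h$ was chosen from $\mathrm{Conj}_G(\gamma)$, the fiber is nonempty and thus has size exactly $|Z_G(\gamma)|$. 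Summing over $h$ yields $|S|=|Z_G(\gamma)|\cdot|\mathrm{Conj}_G(\gamma)\cap H|$.

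Combining the two displays produces the claimed equality. There isn't really a hard step here: the argument is a one-line coset count plus the orbit–stabilizer theorem for the conjugation action of $G$ on itself (whose stabilizer at $\gamma$ is by definition $Z_G(\gamma)$). The only thing to be careful about is that $\mathrm{Conj}_G(\gamma)\cap H$ may be empty, in which case both sides of the identity are zero and the statement holds trivially.
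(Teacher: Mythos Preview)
Your proof is correct and follows essentially the same double-counting argument as the paper: both identify the set $S=\{g\in G:g^{-1}\gamma g\in H\}$, relate $|\Fix_{G/H}(\gamma)|$ to $|S|/|H|$, and then compute $|S|$ by fibering the map $g\mapsto g^{-1}\gamma g$ over $\mathrm{Conj}_G(\gamma)\cap H$, each fiber being a coset of $Z_G(\gamma)$. (One tiny slip: from $gg_0^{-1}\in Z_G(\gamma)$ you get $g\in Z_G(\gamma)g_0$ rather than $g_0 Z_G(\gamma)$, but this does not affect the cardinality.)
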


\begin{proof}
Note that any $g\in G$ satisfies $\g gH = gH$ if and only if $g^{-1}\g g \in H$, so the left-hand side is zero if and only if the right side is zero. Suppose that the right-hand side is not zero; in particular, that there exists an element $\eta\in H\cap \text{Conj}_G(\g)$. Notice that $|Z_G(\eta)|=|Z_G(\g)|$ and $|\Fix_{G/H}(\eta)|=|\Fix_{G/H}(\g)|$, so we may assume without loss of generality that $\g\in H$.

We want to show that $|H|\cdot |\Fix_X(\g)| = |Z_G(\g)|\cdot |H\cap \text{Conj}_G(\g)|$, or, since all cosets have the same size $|H|$, we may write the left-hand side as $|\{g\in G: g^{-1}\g g\in H\}|$. To show this equality, we observe that the map $\phi: \{g\in G: g^{-1}\g g\in H\} \to H\cap \text{Conj}_G(\g)$ given by $\phi(g)=g^{-1}\g g$ is surjective, and then it suffices to show that every $\phi^{-1}(h)$ has size $|Z_G(\g)|$. In fact, $\phi^{-1}(h)=gZ_G(\g)$ where $g$ is any element in $\phi^{-1}(h)$, because
$$ (xg^{-1})\g (xg^{-1})^{-1} = \g  \qquad\Longleftrightarrow\qquad g^{-1}\g g = x^{-1}\g x.$$
The left equality states that $x\in gZ_G(\g)$; the right equality states that $\phi(x)=\phi(g)=h$.
\end{proof}

\subsection{Evaluating $\left|\Fix_Y(\tau)\right|$}
\label{fixycalc}

In the previous subsection we wrote $Y(\zeta)$ in terms of $X$-fixpoints, but this is only useful for cyclic sieving if there is some relationship between $X$-fixpoints and $Y$-fixpoints. Fortunately, when $C$ acts freely there is a strong relationship between these two fixspaces. Before describing this relationship, we recall some facts from elementary number theory:
\begin{prop}
\label{numthyfacts}
Let $S_n$ be the symmetric group on $Z_n=\{1,\dots, n\}$, and suppose that $\tau\in N_{S_n}(C)$.
\begin{enumerate}[(a)]
\item \label{defd} There exists unique $d\in(\z/n\z)^\times$ and $r\in\z/n\z$ such that $\tau(x)\equiv dx+r \bmod n$. In particular, $d=1$ if and only if $\tau\in C$.
\item \label{conj} $\tau$ is $S_n$-conjugate to $\tilde\tau:x\mapsto dx+r'$ where $r'$ is the smallest nonnegative integer such that $r'\equiv r \bmod \gcd(n,d-1)$.  
\item \label{gcd2} If $\gcd(n,d-1)$ divides $r$ then $\left|\Fix_{Z_n}(\tau)\right|=\gcd(n,d-1)$, and otherwise $\Fix_{Z_n}(\tau)=\varnothing$.
\end{enumerate}
If moreover $\cyc(\tau)=(m^k, 1,1)$ for some integers $m$ and $k$, then $n$ is even and:
\begin{enumerate}[(a)]
  \setcounter{enumi}{3}
\item \label{fixp} using the notation of (\ref{conj}) above, $r'=0$ and the fixpoints of $\tilde\tau$ are $n$ and $\frac{n}{2}$;
\item \label{2val} if $\frac{n}{4}\in\z$, then $m=2$.
\end{enumerate}
\end{prop}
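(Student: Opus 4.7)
The plan is to exploit the well-known identification of $N_{S_n}(C)$ with the affine group $\mathrm{Aff}(\z/n\z)$, then deduce each part by elementary manipulations.

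First I would prove (a): since $\tau$ normalizes $C=\langle c\rangle$, conjugation by $\tau$ sends $c$ to an $n$-cycle in $C$, so $\tau c \tau^{-1}=c^d$ for a unique $d\in(\z/n\z)^\times$. Translating this equation to the natural action on $Z_n$ gives $\tau(x+1)\equiv \tau(x)+d \bmod n$, and induction on $x$ produces the affine form $\tau(x)\equiv dx+r$ with $r=\tau(n)$. Uniqueness is clear; the last assertion holds because $d=1$ iff $\tau$ commutes with $c$, iff $\tau\in Z_{S_n}(c)=C$ (since $c$ is an $n$-cycle). For (b), conjugating by $c^s$ replaces $\tau$ with the affine map $x\mapsto dx+(r-s(d-1))$, and as $s$ ranges over $\z/n\z$ the shift $s(d-1)$ sweeps out exactly the multiples of $\gcd(n,d-1)$; so $r$ is well-defined only modulo $\gcd(n,d-1)$, and an appropriate choice of $s$ reduces it to the minimal nonnegative residue $r'$.

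Part (c) is then the elementary statement that the linear congruence $(d-1)x\equiv -r \bmod n$ has a solution iff $\gcd(n,d-1)\mid r$, in which case the solution set has size exactly $\gcd(n,d-1)$. Part (d) follows directly: exactly two fixed points forces $\gcd(n,d-1)=2$ (so $n$ is even) and $2\mid r$, which makes $r'\in\{0,1\}$ even and hence $0$. Then $\tilde\tau(x)=dx$, and $(d-1)x\equiv 0\bmod n$ is equivalent to $x$ being a multiple of $n/\gcd(n,d-1)=n/2$, giving fixpoint set $\{n,n/2\}$.

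The main obstacle is (e), where one has to extract information about $m$ from the global cycle-type hypothesis. The key idea is the orbit-size formula: the orbit of $x$ under $\tilde\tau$ has size $\ord_{n/\gcd(n,x)}(d)$, since $d^j x\equiv x\bmod n$ iff $d^j\equiv 1\bmod n/\gcd(n,x)$. Because every non-fixed cycle of $\tau$ has length exactly $m$, this orbit size must equal $m$ for every non-fixed $x\in Z_n$. Assuming $4\mid n$, I would apply this with $x=n/4$: by (d) this element is not fixed (it is neither $n$ nor $n/2$), so $m=\ord_{4}(d)$. Finally, $\gcd(n,d-1)=2$ together with $4\mid n$ forces $d\not\equiv 1\bmod 4$ (else $4\mid \gcd(n,d-1)$), so $d\equiv 3\bmod 4$, giving $\ord_{4}(d)=2$ and thus $m=2$.
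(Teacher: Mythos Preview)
Your proposal is correct and follows essentially the same route as the paper: parts (a)--(d) are handled identically (with only cosmetic differences, e.g., the paper invokes B\'ezout explicitly in (b) where you describe the range of $s(d-1)$). For (e) the paper also examines $x=n/4$, but computes $\tilde\tau^2(n/4)\equiv d^2\cdot\tfrac{n}{4}\equiv\tfrac{n}{4}\bmod n$ directly rather than via your orbit-size formula $\ord_{n/\gcd(n,x)}(d)$; the two arguments are equivalent, though your formulation is slightly more conceptual.
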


\begin{proof}
The proofs are routine, and we leave them to the end of this subsection.
\end{proof}

For the arguments that follow, by far the most important fact here is part (\ref{gcd2}). It implies that if there is any $\tau\in N_{S_n}(C)$ with cycle type $(m^k,1,1)$, then $n$ must necessarily be even. In particular, this means that $n$ must in fact be even whenever $(\tau,H)$ is \goodpair\!\, and $\tau$ has two one-cycles.

We also follow up on a remark from the end of Section \ref{gpthy}. Clearly $\tau$ is determined by its output mod $n$ for each input, and so part (\ref{defd}) states that the scaling factor $d$ of elements with the desired cycle type has order $m$ as an element of $(\z/n\z)^\times$, and the translation $r$ must be even if $\cyc(\tau)=(m^k,1,1)$. This necessary condition can be elevated to a sufficient condition if $d^b x\not\equiv x$ for any $x$ except $0$ and perhaps $\frac{n}{2}$, for any $1\leq b<m$. Often this condition is quite restrictive; for instance if $n$ is divisible by $3$, then taking $x=\frac{n}{3}$ shows that $d^b \not\equiv 1 \bmod 3$, and so in particular this forces $m=2$. However, if $n$ is prime, then we can divide through by any nonzero $x$, and hence every nonzero $x$ is in a cycle of the same length $m$.

The main idea of the computation of $|\Fix_Y(\tau)|$ is summarized in the following proposition.

\begin{prop}
\label{surjectiveprop}
Let $(\tau,H)$ be \goodpair. Then writing $m$ for the order of $\tau$, as well as $X=S_n/H$ and $Y=C\backslash S_n/H$, the canonical quotient map $\pi:X\to Y$ restricts to a surjective map $\pi_F: \Fix_{X}(\tau)\to\Fix_{Y}(\tau)$. Indeed, for any $y\in Y$,
$$ |\pi_F^{-1}(y)| = 
\begin{cases}
1&\text{ if }n \equiv 1 \bmod{m},\\
2&\text{ if }n \equiv 2 \bmod{m}.
\end{cases}
$$
\end{prop}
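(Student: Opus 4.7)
The plan is to analyze a single fiber $\pi^{-1}(y)$ and compute its intersection with $\Fix_X(\tau)$. Fix $y=CgH\in\Fix_Y(\tau)$; since $C$ acts freely on $X$, the fiber $\pi^{-1}(y)$ consists of the $n$ distinct cosets $c^jgH$ for $j\in\z/n$. Writing $d\in(\z/n)^\times$ and $r\in\z/n$ for the integers from Proposition \ref{numthyfacts}(\ref{defd}), so that $\tau c = c^d \tau$, and letting $i\in\z/n$ be the unique element with $\tau gH=c^igH$, one computes $\tau(c^jgH)=c^{jd+i}gH$; therefore $c^jgH$ lies in $\Fix_X(\tau)$ precisely when $j(d-1)\equiv -i\pmod n$. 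This linear congruence has either $0$ or $\gcd(n,d-1)$ solutions in $\z/n$, according as $\gcd(n,d-1)$ does or does not divide $i$.

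The next step is to pin down $\gcd(n,d-1)$ via Proposition \ref{numthyfacts}(\ref{gcd2}), which equates it with $|\Fix_{Z_n}(\tau)|$. This gives $\gcd(n,d-1)=1$ when $\cyc(\tau)=(m^k,1)$ (the $n\equiv 1\pmod m$ case) and $\gcd(n,d-1)=2$ when $\cyc(\tau)=(m^k,1,1)$ (the $n\equiv 2\pmod m$ case). In the first case divisibility is automatic, yielding exactly one preimage. In the second case I would reduce to verifying that $i$ is always even, which then gives exactly two solutions $j\in\z/n$.

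To prove that $i$ must be even, I would argue by contradiction. If $i$ were odd, the element $h_0:=g^{-1}c^{-i}\tau g\in H$ would have the same cycle type as $c^{-i}\tau\colon x\mapsto dx+(r-i)$. Proposition \ref{numthyfacts}(\ref{fixp}) guarantees $r$ is even, so $r-i$ is odd, and Proposition \ref{numthyfacts}(\ref{conj}) then conjugates $c^{-i}\tau$ to $\phi\colon x\mapsto dx+1$. The heart of the argument is to show that $\phi$ always has a cycle type ruled out for $H$ by the \goodpair\ hypothesis, contradicting $h_0\in H$.

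The tool I would use to compute this cycle type is the $2$-to-$1$ map $\bar x:=(d-1)x+1$ from $\z/n$ onto its odd residues (with fibers $\{x,x+n/2\}$), which satisfies $\overline{\phi(x)}=d\bar x$ and so realizes $\phi$ as a branched cover of the multiplication map $x\mapsto dx$ on odd residues. A direct computation shows each orbit of multiplication-by-$d$ on odd residues lifts either to one $\phi$-orbit of doubled length or to two $\phi$-orbits of the same length, depending on whether $\sigma_{m-1}:=1+d+\cdots+d^{m-1}$ equals $n/2$ or $0$; these are the only two possibilities, since $(d-1)\sigma_{m-1}\equiv 0\pmod n$ forces $\sigma_{m-1}\in\{0,n/2\}$. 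A short case split on the parity of $m$ and on whether $4\mid n$ (with the latter forcing $m=2$ and $k$ odd, via $d\equiv 3\pmod 4$ and hence $d^2\equiv 1\pmod 4$) then identifies the cycle type of $\phi$ as one of $(\ell^{n/\ell})$, $(m^k,2)$, or $((2m)^{k/2},2)$, each of which is exactly what the three bullets in the definition of \goodpair\ were built to exclude. The main obstacle I foresee is keeping this final case split self-contained: one must simultaneously track $2$-adic information about $n$ and about $d-1$, verify the claimed values of $\sigma_{m-1}$ in each regime, and confirm that the slightly mysterious list of forbidden cycle types in the definition of \goodpair\ matches the computation exactly.
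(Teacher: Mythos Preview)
Your overall strategy matches the paper's exactly: translate $\Fix_X(\tau)\cap\pi^{-1}(CgH)$ into the linear congruence $(d-1)j\equiv -i\pmod n$, read off $\gcd(n,d-1)\in\{1,2\}$ from the number of $\tau$-fixpoints via Proposition~\ref{numthyfacts}(\ref{gcd2}), and in the $\gcd=2$ case rule out odd $i$ by showing that $g^{-1}c^{-i}\tau g\in H$ would have a cycle type excluded by $C$-admissibility. The paper packages this last step as a separate Lemma~\ref{badcycles}, which it then invokes.

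Where you genuinely diverge from the paper is in \emph{how} you compute $\cyc(c^{-i}\tau)$ for odd $i$. The paper's proof of Lemma~\ref{badcycles} works directly with the congruence $(c\tau)^b(x)=x$, factoring $n$ via the Chinese Remainder Theorem and analyzing the resulting conditions prime-by-prime. Your route---the $2$-to-$1$ map $\bar x=(d-1)x+1$ onto odd residues, intertwining $\phi$ with multiplication by $d$---is different and more structural: once you observe $\phi^m(x)=x+\sigma_{m-1}$ with $\sigma_{m-1}\in\{0,n/2\}$, the lifting of orbits is forced, and the parity of $\sigma_{m-1}$ (which is the parity of $m$, since each $d^j$ is odd) together with the parity of $n/2$ immediately sorts the cases. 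This is arguably cleaner than the CRT computation, though the paper's approach handles the $m=2$ subcase (where your base has no branch point and $\ell$ is not pinned down) a bit more uniformly. One small caution: your parenthetical ``via $d\equiv 3\pmod 4$ and hence $d^2\equiv 1\pmod 4$'' does not by itself give $m=2$; you should cite Proposition~\ref{numthyfacts}(\ref{2val}) directly for that implication, as the paper does.
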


\begin{proof}
First, observe that $\pi_F$ is well-defined, that is, $\pi_F(\Fix_X(\tau))\subseteq \Fix_Y(\tau)$, since for any $\gamma H\in \Fix_X(\tau)$, we have $\tau\cdot C\gamma H = C\tau \gamma H = C\gamma H$.

Now suppose that $CgH\in\Fix_Y(\tau)$, that is, $g\in S_n$ is an element such that $\tau\cdot CgH = CgH$. In particular, this means 
$$\tau gH \subseteq \coprod_{i=1}^n c^i gH,$$
where the cosets on the right are all distinct because $C$ acts freely on $X$. Thus, there must be some $i$ such that $\tau gH = c^igH$. 

Note that all elements of $\pi^{-1}(CgH)$ have the form $c^\xi gH$ for some integer $0\leq \xi<n$. Since $C$ acts freely on $X$, these are distinct for distinct $\xi$. Letting $d$ be the element of $(\z/n\z)^\times$ guaranteed by Proposition \ref{numthyfacts}(\ref{defd}), we may write:
$$\tau (c^{\xi} gH)= c^{d\xi}\tau gH = c^{d\xi+i}gH.$$
Thus $\tau (c^{\xi} gH)=c^{\xi} gH$ if and only if $d\xi+i\equiv\xi \bmod n$. In other words, for any $CgH\in Y$,
$$\left|\pi^{-1}_F(CgH)\right|=\left|\{\xi\in\z/n\z: (d-1)\xi\equiv -i \bmod n\}\right|.$$
We remark that the right-hand side is not \emph{a priori} independent of the element $CgH$, because $i$ generally does depend on $gH$.

In this way, the $n\equiv 1 \bmod m$ case resolves immediately. Since $(\tau,H)$ is \goodpair, we know $\tau$ has a unique fixpoint in $\z/n\z$ and thus $\gcd(n,d-1)=1$ by Proposition \ref{numthyfacts}(\ref{gcd2}). In other words, $d-1$ is invertible mod $n$, and so for any value of $i$ we have the unique solution $\xi\equiv -i(d-1)^{-1}\bmod n$.

The argument for $n\equiv 2 \bmod m$ is similar, but we require a technical prerequisite. By definition, $\tau g\in c^igH$, or equivalently, $g^{-1}c^{-i}\tau g\in H$. Therefore, by Lemma \ref{badcycles} below, $-i$ must not be odd. Therefore, we may divide both sides of the congruence by $2$ and solve. Namely,
$$(d-1)\xi\equiv -i\bmod n \qquad\Longleftrightarrow\qquad\ts \xi \equiv -\frac{i}{2}\left(\frac{d-1}{2}\right)^{-1} \bmod \frac{n}{2},$$
where the left congruence is defined by applying Proposition \ref{numthyfacts}(\ref{gcd2}): since $\gcd(n,d-1)=2$, we have $\frac{d-1}{2}$ invertible mod $\frac{n}{2}$. This solution $\xi$ is unique mod $\frac{n}{2}$, and hence there are precisely two solutions mod $n$, as desired.
\end{proof}

Finally, we prove the required technical lemma to complete the proof of Proposition \ref{surjectiveprop}.

\begin{lem}
\label{badcycles}
Fix an element $g\in S_n$ and let $j$ be an integer. For any $\tau\in N_{S_n}(C)$ such that $\cyc(\tau)=(m^k,1,1)$ for some integers $m$ and $k$, then ($n$ is even and):
$$ \cyc(c^j\tau) = 
\begin{cases}
(m^k,1,1)&\text{if } j \text{ is even},\\
(\ell^{\frac{n}{\ell}}) & \text{for some } \ell | n, \text{ if } j \text{ is odd and } m=2,\\
(m^k, 2) & \text{if } j \text{ is odd and } m>2 \text{ is even},\\
\big((2m)^{\frac{k}{2}}, 2\big) & \text{if } j \text{ is odd and } m \text{ is odd}.
\end{cases}
$$
In particular, if we additionally choose $H\leq S_n$ such that $(\tau,H)$ is \goodpair, then no element conjugate to $c^j\tau$ is contained in $H$ for any odd $j$.
\end{lem}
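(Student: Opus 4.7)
The plan is to reduce the cycle type calculation to an explicit affine transformation. Since cycle type is conjugation-invariant and $c^j\tau:x\mapsto dx + (r+j) \bmod n$ (in the notation of Proposition \ref{numthyfacts}(\ref{defd})), Proposition \ref{numthyfacts}(\ref{conj}) shows that $c^j\tau$ is $S_n$-conjugate to $x\mapsto dx + j'$ with $j' \equiv r+j \bmod 2$ (since $\gcd(n,d-1)=2$); as $\tau$ has two fixpoints, $r$ is even by Proposition \ref{numthyfacts}(\ref{fixp}), so $j'=0$ if $j$ is even and $j'=1$ if $j$ is odd. The $j$-even case is then immediate: $c^j\tau$ is conjugate to the normalization $\tilde\tau:x\mapsto dx$, hence has cycle type $(m^k,1,1)$.

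For $j$ odd, write $\sigma:x\mapsto dx+1$. A short induction on $t$ using $\tilde\tau c\tilde\tau^{-1}=c^d$ gives $\sigma^t = c^{S_t}\tilde\tau^t$ where $S_t:=1+d+\cdots+d^{t-1}$, so $\sigma^m=c^{S_m}$. The identity $S_m(d-1)\equiv d^m-1\equiv 0\bmod n$, together with $\gcd(n,d-1)=2$, forces $S_m\in\{0,n/2\}\bmod n$. The cornerstone of the cycle count is the formula $|\Fix(\sigma^t)|=\gcd(n,d^t-1)$ when $\gcd(n,d^t-1)\mid S_t$ and $0$ otherwise. The hypothesis $\cyc(\tau)=(m^k,1,1)$ is equivalent to $\gcd(n,d^s-1)=2$ for every $0<s<m$ (each $\tau^s$ must preserve exactly the two fixpoints of $\tau$, since otherwise an $m$-cycle would split into strictly shorter cycles). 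Combined with $S_t\equiv t\bmod 2$ (each $d^i$ is odd), this yields $|\Fix(\sigma^t)|=2$ when $t$ is even and $|\Fix(\sigma^t)|=0$ when $t$ is odd, for every $0<t<m$.

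The rest is a cascade of fixpoint counting, done separately by parity of $m$. For $m>2$ even, the constraint $\gcd(n,d^2-1)=2$ forces $v_2(n)=1$ (since $d\equiv 3\bmod 4$ makes $d^2\equiv 1\bmod 8$), so $n/2$ is odd; the parity of $S_m$ then rules out $S_m=n/2$, giving $\sigma^m=\mathrm{id}$. The absence of odd-period fixpoints eliminates all odd cycle lengths greater than $1$, $|\Fix(\sigma^2)|=2$ forces a single $2$-cycle, and inductively $c_L=0$ for even $L$ with $2<L<m$, leaving $c_m=k$. For $m$ odd, $S_m$ is odd so $S_m=n/2$ (hence $n\equiv 2\bmod 4$); $\sigma$ then has order $2m$, and the parallel cascade produces one $2$-cycle together with $k/2$ cycles of length $2m$. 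For $m=2$, $\sigma$ has order at most $4$, and the two subcases $S_2=0$ and $S_2=n/2$ yield cycle types $(2^{n/2})$ and $(4^{n/4})$ respectively, both of the form $(\ell^{n/\ell})$. I expect the main technical hurdle to be the composite-odd-$m$ subcase, where ruling out intermediate cycle lengths of the form $2d'$ for divisors $1<d'<m$ of $m$ requires invoking $|\Fix(\sigma^{2d'})|=2$; this relies on $\gcd(n,d^{2d'}-1)=2$ at each proper divisor, using the full strength of the hypothesis (the bound $2d'<m$ holds because $d'\leq m/3$ when $m$ is odd composite). The ``in particular'' conclusion is then immediate: each cycle type produced for $j$ odd lies on the list of cycle types that a \goodpair{} pair $(\tau,H)$ forbids $H$ to contain.
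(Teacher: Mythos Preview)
Your proof is correct, and takes a genuinely different route from the paper's. Both arguments begin with the same reduction: using Proposition~\ref{numthyfacts}(\ref{conj}) and~(\ref{fixp}) to replace $c^j\tau$ by either $\tilde\tau:x\mapsto dx$ (for $j$ even) or $\sigma:x\mapsto dx+1$ (for $j$ odd), and both ultimately rest on the same fixed-point congruence $(d^t-1)x\equiv -S_t\bmod n$. The divergence is in how that congruence is analyzed. The paper factors it as $S_b\bigl(1+(d-1)x\bigr)\equiv 0$ and attacks it prime-by-prime via the Chinese Remainder Theorem: it shows $S_m\equiv 0\bmod p_i^{e_i}$ for each odd prime power $p_i^{e_i}\|n$, handles the prime $2$ separately, and then argues directly that $S_b$ is a unit mod $n/2$ for $b<m$ to locate the unique $2$-cycle. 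You instead extract the single clean consequence of the hypothesis---that $\gcd(n,d^s-1)=2$ for every $0<s<m$---and feed it through Proposition~\ref{numthyfacts}(\ref{gcd2}) to obtain $|\Fix(\sigma^t)|\in\{0,2\}$ for all $0<t<m$, then reconstruct the full cycle type by a M\"obius-style cascade over divisors. Your approach avoids the CRT entirely and packages the case analysis more uniformly; the paper's is more hands-on with the arithmetic and makes the role of the individual prime factors of $n$ visible. One minor note: your parenthetical ``since $d\equiv 3\bmod 4$ makes $d^2\equiv 1\bmod 8$'' is more than you need, since $8\mid d^2-1$ for every odd $d$; and your bound $2d'<m$ in the odd-$m$ case, while correct, is stronger than necessary (it suffices that $m\nmid 2d'$, which for $m$ odd and $d'\mid m$, $d'<m$, is automatic).
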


\begin{proof}
The cycle type of $\tau$ together with Proposition \ref{numthyfacts}(\ref{gcd2}) imply that $\gcd(n,d-1)=2$, and so by Proposition \ref{numthyfacts}(\ref{conj}) we have that the cycle type of $c^j\tau$ depends only on the parity of $j$. In particular, all $c^j\tau$ are conjugate to either $\tau$ or $c\tau$, and so it suffices to compute the cycle type of the latter.

Moreover, we may replace $\tau$ with $\tilde\tau$, and together with Proposition \ref{numthyfacts}(\ref{fixp}), we may say that $\tau(x)=dx$ without loss of generality. 
Because $c\tau(x)\equiv dx+1 \bmod n$,
$$ (c\tau)^b(x) = x \qquad\Longleftrightarrow\qquad \left( d^{b-1} + \cdots + d + 1 \right) \!\big(1+(d-1)x\big) \equiv 0 \bmod n. $$

Note that $m$ is the order of $\tau$ in $S_n$, and thus the order of $d$ in $(\z/n\z)^\times$. Moreover, both $n$ and $d-1$ are even by Proposition \ref{numthyfacts}(\ref{gcd2}), and hence the congruence has no solution unless $b$ is even: the left-hand side would be the product two odd numbers and hence nonzero.

This is enough to resolve case $m=2$, in which $d^2\equiv 1\bmod n$. Using this fact, we have the following congruence mod $n$:
\begin{align*}
\left( d^{b-1} + \cdots + d + 1 \right) \!\big(1+(d-1)x\big)
 &\equiv \ts\frac{b}{2}(d + 1) \big(1+(d-1)x\big) \\
 &\ts\equiv \frac{b}{2}\big((d+1) + (d^2-1)x\big) \\
 &\ts\equiv \frac{b}{2}(d+1).
\end{align*}
This expression is independent of $x$. Thus, letting $\ell$ denote the smallest positive integer $b$ such that $\frac{b}{2}(d+1)\equiv 0\bmod n$, we have shown that every cycle of $c\tau$ has length $\ell$.

We now assume that $m>2$. It thus suffices to show that $c\tau$ has exactly one two-cycle, and every other $x$ is contained in a cycle of length $m$ if $m$ is even, or $2m$ if $m$ is odd. It will be convenient to write the prime factorization $n= 2^{e_0}p_1^{e_1}\cdots p_a^{e_a}$, where the $p_i$ are distinct odd primes and each $e_i$ is a positive integer. By \CRT, we may take $p_0=2$ and then restate the equivalence above as
$$ (c\tau)^b(x) = x \quad\Longleftrightarrow\quad \left( d^{b-1} + \cdots + d + 1 \right) \!\big(1+(d-1)x\big) \equiv 0 \bmod p_i^{e_i} \text{ for all } 0\leq i\leq a. $$

Recall that $(n,d-1)=2$, via Proposition \ref{numthyfacts}(\ref{gcd2}), and thus none of the odd $p_i$ divide $d-1$. In particular, $d-1$ is invertible mod $p_i^{e_i}$ and thus we again apply the \CRT:
\begin{align*}
d^m &\equiv 1\bmod n, \\
(d-1)\!\left( d^{m-1} + \cdots + d + 1 \right) &\equiv 0 \bmod p_i^{e_i}  \qquad\text{for all } 1\leq i\leq a, \\
d^{m-1} + \cdots + d + 1  &\equiv 0 \bmod p_i^{e_i}  \qquad\text{for all } 1\leq i\leq a.
\end{align*}
Thus the desired equivalence $\left( d^{b-1} + \cdots + d + 1 \right) \!\big(1+(d-1)x\big) \equiv 0 \bmod p_i^{e_i}$ holds for the odd primes. For the prime $p_0=2$ we use our assumption that $m>2$. By Proposition \ref{numthyfacts}(\ref{2val}), this means that $\frac{n}{4}\notin\z$ and thus $2^{e_0}=2$. Therefore,
$$d^{m-1} + \cdots + d + 1 \equiv m \bmod 2^{e_0}. $$
So, if $m$ is even then every cycle has length dividing $m$, but if $m$ is odd then every cycle has length dividing $2m$ and (in particular) \emph{no} cycle has length $m$.

We conclude by showing that for any $x\not\equiv -(d-1)^{-1} \bmod \frac{n}{2}$, that $(c\tau)^b(x)\neq x$ for any $b<m$. Before doing so, we make two observations. First, the inverse is well-defined because, as above, $\frac{n}{2}$ is odd, so $\gcd(d-1,\frac{n}{2})=1$ by Proposition \ref{numthyfacts}(\ref{gcd2}). Second, in so doing we will complete the proof: it will guarantee that each such $x$ lies in a $c\tau$-cycle of size at least $m$. Taken together with the previous paragraph, this means that all but two $x$ lie in $c\tau$-cycles of the desired size. Moreover, the other two must form a $2$-cycle, since by Proposition \ref{numthyfacts}(\ref{gcd2}) neither is a fixpoint.

Proposition \ref{numthyfacts}(\ref{fixp}) already gives an analogous statement for $\tau$: for every $1\leq b<m$, all but two $x\in\z/n\z$ are contained in an $m$-cycle, and thus $d^bx\not\equiv x$ for any $x\not\equiv 0\bmod\frac{n}{2}$. In particular, choosing $x_i=\frac{n}{p_i}$ for each $1\leq i\leq a$ shows that:
\begin{align*}
\ts\frac{n}{p_i}(d-1)\left(d^{b-1}+\cdots+d+1\right) &\not\equiv 0 \bmod n \qquad\text{for all } 1\leq i\leq a \\
(d-1)\left(d^{b-1}+\cdots+d+1\right) &\not\equiv 0 \bmod p_i \qquad\text{for all } 1\leq i\leq a
\end{align*}
Hence, $d^{b-1}+\cdots+d+1$ is not divisible by any $p_i$ and so is invertible mod $\frac{n}{2}$. Therefore, we have the following necessary condition:
$$ (c\tau)^b(x) = x \qquad\Longrightarrow\qquad 1+(d-1)x \equiv 0 \bmod \ts\frac{n}{2} $$
As discussed above, $d-1$ is invertible mod $\frac{n}{2}$. Therefore, for any $x\not\equiv -(d-1)^{-1}\bmod\frac{n}{2}$ we conclude that $(c\tau)^b(x)\neq x$, as desired.

\end{proof}

Briefly, we return to prove the number-theoretic facts from Proposition \ref{numthyfacts}:

\begin{proof}[Proof (of Proposition \ref{numthyfacts})] 
Throughout the proof, let $z=\gcd(n,d-1)$.

Part (a): Because $\tau C = C\tau$, there is some unique $0\leq d<n$ such that $\tau c = c^d\tau$. Note that also there is an $e$ such that $\tau c^e= c\tau$ for some $e$, and thus $de\equiv 1\bmod n$, so $d$ is invertible in $\z/n\z$. Additionally,
$$\tau(x+1) = (\tau c)(x) = (c^d\tau)(x) \equiv \tau(x)+d \bmod n.$$
Hence, the fact that $\tau(x)=dx+r$ for some $r$ follows by induction. Finally, plugging in $x=n$, the unique $r\in \z/n\z$ that satisfies the equation is $r=\tau(n)$, or $r=0$ if $\tau(n)=n$.

Part (b): Suppose that $a$ and $b$ are integers such that $an+b(d-1)=z$; these exist by B\'ezout's lemma. Moreover, let $q$ be the integer such that $r = r' + qz$. Then, modulo $n$:
\begin{align*} 
(c^{qb}\tau c^{-qb})(x) &\equiv \tau(x-qb)+qb \\
 &\equiv d(x-qb)+r+qb \\
 &\equiv dx + r - qb(d-1) \\
 &\equiv dx + [r-qz],
\end{align*}
and so $\tau$ is conjugate to $\tilde\tau$, as desired.

Part (c): The number of $Z_n$-fixpoints is the number of 1s in the cycle type, and so we may replace $\tau$ with $\tilde\tau$. Then $x$ is a $Z_n$-fixpoint precisely when $dx+r \equiv x\bmod n$, that is, when $(d-1)x\equiv -r'\bmod n$. The left-hand side is divisible by $z$, and thus there are no solutions to this equation unless $r'$ is divisible by $z$; by definition of $r'$, this happens only if $r'=0$. In this case we may divide through by $z$. That is, the following are equivalent:
\begin{align*} 
(d-1)x &\equiv 0 \bmod n \\
\ts\frac{d-1}{g} \cdot x &\equiv 0 \bmod \ts\frac{n}{z} \\
x &\equiv 0 \bmod \ts\frac{n}{z},
\end{align*}
where the last statement holds because $\frac{d-1}{z}$ is invertible mod $\frac{n}{z}$. Thus the solutions to this congruence, and hence the $Z_n$-fixpoints of $\tau$, are $\{1\cdot\frac{n}{z},2\cdot\frac{n}{z},\dots, z\cdot \frac{n}{z}\}$.

For parts (d) and (e) we now have $z=2$. In particular, note that $n$ must be even.

Part (d): Repeating the proof of part (c) we see that $z|r$ and hence $r'=0$, and moreover the fixpoints are $\{1\cdot\frac{n}{2},2\cdot\frac{n}{2}\}$, as desired.

Part (e): Because $d\in(\z/n\z)^\times$, we know $d$ must be odd, so write $d=2d'+1$. If $\frac{n}{4}\in\z$, we compute
$$\ts\tau^2(\frac{n}{4}) \equiv d^2 \cdot \frac{n}{4} \bmod n \equiv 4(d'^2+d')\frac{n}{4} + \frac{n}{4} \bmod n.$$
Thus, $\frac{n}{4}$ is a fixpoint of $\tau^2$; that is, it is contained in a cycle of $\tau$ whose length divides $2$. From part (d) we know that it is not a fixpoint of $\tau$, and thus it must be in a $2$-cycle. But $\tau$ only has cycles of length 1 and $m$; hence $m=2$.
\end{proof}

\subsection{Completing the Proof of Theorem \ref{technicalcsp}}


We combine the previous two subsections into the following result:

\begin{thm}
\label{technicalcsp-simple}
Suppose that $(\tau, H)$ is \goodpair, and write $m$ for the order of $\tau$. If $H$ additionally avoids the cycle type $(4,2^{\frac{n-4}{2}})$, then $Y(\zeta)=\left|\Fix_Y(\tau)\right|$, where $\zeta$ is a primitive $m^\text{th}$ root of unity, and
$$ Y(q) = \frac{1}{[n]_q} \cdot \frac{ \mathcal H(\c[x]^H ,q) }{ \mathcal H(\c[x]^{S_n},q) }. $$
\end{thm}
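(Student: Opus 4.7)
The plan is to combine Proposition \ref{moliencalc} with Proposition \ref{surjectiveprop}. The former expresses $Y(\zeta)$ as a weighted sum of fixpoint counts $|\Fix_X(\mu)|$ for cycle types $\mu$ close to $\cyc(\tau)$, while the latter computes $|\Fix_Y(\tau)|$ as either $|\Fix_X(\tau)|$ or $\tfrac{1}{2}|\Fix_X(\tau)|$, depending on $n \bmod m$. The avoidance conditions on $H$---both the ones comprising $C$-admissibility and the additional $(4,2^{(n-4)/2})$ hypothesis---are precisely what is needed to annihilate every summand in Proposition \ref{moliencalc} except the one corresponding to $\mu=\cyc(\tau)$, with leftover coefficient matching the prediction of Proposition \ref{surjectiveprop}.

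I would organize the proof into three cases, based on $n \bmod m$ (and hence on the cycle type of $\tau$). In the first case, $n \equiv 1 \bmod m$ and $\cyc(\tau) = (m^k, 1)$, Proposition \ref{moliencalc}(a) applies with $n - km = 1$. The only partition of $1$ is $\lambda=(1)$, so the sum has just one term; cancellation of the $(1-\zeta)$ prefactor against the $(1-\zeta)$ in the denominator leaves $Y(\zeta)= |\Fix_X(\tau)|=|\Fix_Y(\tau)|$.

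In the second case, $n \equiv 2 \bmod m$ with $m > 2$, Proposition \ref{moliencalc}(a) still applies, with $n - km = 2$. The partitions of $2$ are $(1,1)$ and $(2)$, and the $C$-admissibility restriction that $H$ avoid $(m^k,2)$ kills the second term. The surviving $(1,1)$-contribution simplifies by cancellation to $\tfrac{1}{2}|\Fix_X(\tau)| = |\Fix_Y(\tau)|$. The third case, $m=2$ with $n$ even (the only way $n \equiv 0 \bmod m$ is consistent with $\tau$ having the prescribed cycle type), uses Proposition \ref{moliencalc}(b). The exceptional term there involves $|\Fix_X(2m, m^{k-1})| = |\Fix_X(4, 2^{(n-4)/2})|$, which is killed by the extra hypothesis of the theorem. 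In the remaining sum over $\lambda \vdash 2$ with $\lambda_1 \neq 2$, only $\lambda=(1,1)$ qualifies, and the answer again matches $|\Fix_Y(\tau)|=\tfrac{1}{2}|\Fix_X(\tau)|$.

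The main obstacle is really just careful bookkeeping---verifying that, in each of the three cases, the prefactors and denominators from Proposition \ref{moliencalc} collapse to precisely the numerical factor ($1$ or $\tfrac{1}{2}$) demanded by Proposition \ref{surjectiveprop}. Conceptually, however, each avoidance hypothesis on $H$ earns its keep: freeness of $C$ on $X$ is what makes $Y(q)$ a polynomial in the first place, the $(m^k,2)$-type restrictions of $C$-admissibility eliminate the extraneous partition-of-$2$ terms in case (a), and the separate $(4,2^{(n-4)/2})$ hypothesis handles the one bonus cycle type unique to case (b).
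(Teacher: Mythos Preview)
Your proposal is correct and follows essentially the same approach as the paper's own proof. The only difference is organizational: you split into three cases indexed by $n\bmod m$ (invoking part (a) of Proposition~\ref{moliencalc} in the first two cases and part (b) in the third), whereas the paper splits first on whether $\cyc(\tau)$ has one or two fixed points and then, in the two-fixed-point case, sub-splits on $m=2$ versus $m>2$; the underlying computations and the way the avoidance hypotheses are used are identical.
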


\begin{proof}
We begin with the case $\cyc(\tau)=(m^{\frac{n-1}{m}},1)$. The calculation from Proposition \ref{moliencalc} simplifies considerably since there is only one $\l$ that partitions $n-km$, namely $\l=(1)$. Therefore, if $\zeta$ is a primitive $m^\text{th}$ root of unity, then
$$ Y(\zeta) = (1-\zeta) \frac{\left|\Fix_X(m^k,1)\right|}{1-\zeta} = \left|\Fix_X(\tau)\right|.$$
Thus, by Proposition \ref{surjectiveprop} we have $Y(\zeta)=\left|\Fix_Y(\tau)\right|$, as desired.

On the other hand, if $\cyc(\tau) = (m^{\frac{n-1}{m}},1,1)$, Proposition \ref{moliencalc} simplifies similarly: there are now two $\l$ that partition $n-km$, namely $\l=(1,1)$ and $\l=(2)$. Therefore, if $\zeta$ is a primitive $m^\text{th}$ root of unity, then 
$$ Y(\zeta)= \left\{
\begin{array}{ll}
\frac12\left|\Fix_X(2^k,1,1)\right|+k\left|\Fix_X(4,2^{k-1})\right| & \text{ if } m=2 \\
\frac12\left|\Fix_X(m^k,1,1)\right| + \left(\frac{1-\zeta}{1+\zeta}\right)\left|\Fix_X(m^k,2)\right| & \text{ if } m>2
\end{array}
\right. .$$
In either case, Lemma \ref{fixpts} and the conditions on $H$ imply that the second term vanishes and $Y(\zeta)=\frac12|\Fix_X(\tau)|$. Thus by Proposition \ref{surjectiveprop} we have $Y(\zeta)=\left|\Fix_Y(\tau)\right|$, as desired.
\end{proof}

\begin{remark}
As mentioned before, trying to extend this argument to $n\equiv 3\bmod m$ is more troublesome. For simplicity let us suppose that $m>3$, then we may see the difficulty by using Proposition \ref{moliencalc} again:
$$Y(\zeta) = \ts\frac16\left(
\frac{2(1-\zeta)(1-\zeta^2)}{1+\zeta+\zeta^2}\left|\Fix_X(m^k,3)\right| + 
3(1-\zeta)\left|\Fix_X(m^k,2,1)\right| +
(1+\zeta)\left|\Fix_X(m^k,1,1,1)\right|
\right).$$
None of these coefficients are rational, and so if $Y(\zeta)$ is to evaluate to a positive integer, it must have contributions from multiple terms. Unlike for the $n\equiv 2 \bmod m$ case, we cannot simply exclude the cycle type giving complex contribution and focus on the only $\tau$ remaining.
\end{remark}

This is nearly all of Theorem \ref{technicalcsp}; to complete the proof, we must compute $Y(\zeta)$ at non-primitive roots of unity. Recall that the conditions on $\tau$ and $H$ of Theorem \ref{technicalcsp} are equivalent to the fact that $(\tau^b, H)$ is \goodpair\!\, for all $1\leq b\leq m-1$ and also $H$ avoids $(4,2^{\frac{n-4}{2}})$. Thus, we may apply Theorem \ref{technicalcsp-simple} to all powers $\tau^b$ of $\tau$, in which case the corresponding order will be $\frac{m}{\gcd(m,b)}$ and hence the corresponding root of unity may be chosen to be $\zeta^b$. 

Hence we have shown that $\left|\Fix_Y(\tau^b)\right| = Y(\zeta^b)$ for all $1\leq b\leq m-1$. Since the $b=0$ case is straightforward, this completes the proof of cyclic sieving.
\qed

\section{Secondary Cyclic Sieving}
\label{futurework}

We conclude by observing that Theorem \ref{technicalcsp} appears to be an example of a more general phenomenon.

\begin{defn}
Suppose that a group $G$ acts on a set $X$, and $(X,X(q),C)$ exhibits the cyclic sieving phenomenon for some polynomial $X(q)$ and cyclic subgroup $C\leq G$ which acts freely. Define $Y=C\backslash X$, and the action of $N_G(C)$ on $Y$ via $\tau \cdot Cx = C(\tau \cdot x)$, and the polynomial $Y(q)=\frac{1}{[n]_q}X(q)$. Then for any $\tau$ such that $(Y,Y(q),\<\tau\>)$ exhibits the cyclic sieving phenomenon, we say that $\tau$ \textbf{exhibits a secondary cylic sieving phenomenon} with respect to $(X,X(q),C)$.
\end{defn}

\begin{remark}
As before, $Y(q)$ is a polynomial in $\z[q]$ because of Proposition \ref{freeactpoly}.
\end{remark}

In particular, Theorem \ref{braceletcount} can be reformutated as stating that if $\gcd(\a)=1$ and $W(\alpha)$ is the collection of words having exactly $\a_i$ occurrences of the letter $i$, then the action of reflection exhibits a secondary cyclic sieving phenomenon with respect to the triple $(W(\alpha), {n\brack\a}_q,C)$. Similarly, Theorem \ref{technicalcsp} can be understood as describing some sufficient conditions for an element $\tau\in S_n$ to exhibit a secondary cyclic sieving phenomenon for the triple $(S_n/H, X(q), C)$ for the polynomial $X(q)$ as given in (\ref{coset-polynomial}).

As a different example, consider the set $X_k$ of noncrossing partitions $\{1,\dots, n\}$ with $k$ blocks. Recall that a set partition of $\{1,\dots, n\}$ is called \textbf{noncrossing} if for any four numbers $i<j<p<q$ such that $i$ and $p$ are in the same block, and $j$ and $q$ are in the same block, then in fact all four are in the same block. Noncrossing partitions admit a geometric action by the subgroup $D_n\is\<c,\tau\>$ of $S_n$, where $c$ and $\tau$ are the same elements that act on $W(\alpha)$ as described in Section \ref{necklacecsp}.

The elements of $X_k$ are counted by the \textbf{Narayana numbers} $N(n,k)$, which have a product formula that suggests a $q$-analogue:

$$N(n,k) = \frac{1}{n}\binom{n}{k}\binom{n}{k+1}; \qquad\qquad X_k(q) = \frac{1}{[n]_q}\pvec{n}{k}_{q}\pvec{n}{k-1}_{q}.$$

It was shown in \cite[\S 7]{ReiStaWhi2004} that $(X_k,X_k(q), \<c\>)$ exhibits the cyclic sieving phenomenon. Moreover, we can check that $\<c\>$ acts freely whenever $\gcd(n,k)=\gcd(n,k-1)=1$. In particular, this implies that $n$ must be odd, and thus $[n]_q$ evaluates to $1$ at $q=-1$. Thus:
$$ Y_k(-1) = \left. \frac{1}{[n]_q^2}\pvec{n}{k}_{q}\pvec{n}{k-1}_{q} \right|_{q=-1} = \left. \frac{1}{[n]_q}\pvec{n}{k}_{q}\pvec{n}{k-1}_{q} \right|_{q=-1} = X_k(-1).$$

In \cite[\S 3.2]{Din2016} Ding computes the number of $\tau$-fixed elements of $X_k$ to be $X_k(-1)$, and hence $Y_k(-1)$. Also, in \cite[\S 4]{CalSmi2005}, Callan and Smiley show the surprising fact that the number of $\tau$-fixed elements of $Y_k$ is the same as the number of $\tau$-fixed elements of $X_k$. We thus conclude that $Y(-1)=\#\{y\in Y_k: \tau(y)=y\}$. Hence, $(Y_k,Y_k(q),\<\tau\>)$ exhibits the cyclic sieving phenomenon, or in other words, $\tau$ exhibits a secondary cyclic sieving phenomenon with respect to $(X_k,X_k(q),\<c\>)$.

The cyclic sieving literature is extensive (see, for instance, \cite{BodRho2016}, \cite{Gor2019}, \cite{ReiSom2018}, \cite{SheWen2020}, \cite{Thi2017}) and so we conclude with the following natural question:

\begin{question}
To what extent do previously known cyclic sieving results admit secondary cyclic sieving phenomena?
\end{question}

\section{Acknowledgments}

This work was supported by NSF grant DMS-1601961. The author also wishes to thank an anonymous referee who left generous comments and pointed out an error in an earlier version of the paper.

\bibliography{masterbib}{}
\bibliographystyle{abbrv}

\end{document}